\documentclass[12pt]{article}

\usepackage[utf8]{inputenc}

\usepackage{amsmath, amssymb, amsthm, amsfonts, mathtools}

\usepackage{enumitem}
\usepackage{bm}
\usepackage{mathrsfs}
\usepackage{graphicx}
\usepackage{hyperref}
\usepackage{cite}
\usepackage{color}

\theoremstyle{plain}
\newtheorem{theorem}{Theorem}[section]
\newtheorem{lemma}[theorem]{Lemma}

\newtheorem{corollary}[theorem]{Corollary}

\theoremstyle{definition}
\newtheorem{definition}[theorem]{Definition}
\newtheorem{assumption}[theorem]{Assumption}

\theoremstyle{remark}
\newtheorem{remark}[theorem]{Remark}

\def\RB{\mathbb{R}}
\def\EE{\mathbb{E}}
\def\FC{\mathcal{F}}
\def\PP{\mathbb{P}}
\def\QQ{\mathbb{Q}}

\def\DD{\mathcal{D}}

\def\KK{\mathcal{K}}

\def\HH{\mathcal{H}}
\def\LL{\mathcal{L}}

\def\ZZ{\mathcal{Z}}
\usepackage{authblk}
\usepackage{hyperref}

\title{Explicit Solution of Infinite-Horizon Linear Backward Stochastic Volterra Integral Equations}

\author[1]{Samia Yakhlef\thanks{Email: 
\href{mailto:s.yakhlef@univ-biskra.dz}{s.yakhlef@univ-biskra.dz}}}

\author[1]{Hilal Ardjani\thanks{Corresponding author. Email: 
\href{mailto:hilal.ardjani@univ-biskra.dz}{hilal.ardjani@univ-biskra.dz}; 
\href{mailto:hilalardjani@gmail.com}{hilalardjani@gmail.com}}}

\affil[1]{Laboratory of Applied Mathematics, Mohamed Khider University, Biskra, Algeria}
\date{}

\begin{document}

\maketitle

\begin{abstract}
We study linear backward stochastic Volterra integral equations
(BSVIEs) on the infinite time horizon. By introducing weighted
function spaces with exponential decay, we establish existence
and uniqueness of adapted M-solutions. We construct an
infinite-horizon resolvent kernel and derive explicit formulas
for the solution components $(Y,Z,K)$ using a Girsanov
transformation and Hida–Malliavin calculus. The results extend
the finite-horizon theory of Hu and Øksendal to the infinite
horizon framework.
\end{abstract}

\paragraph{Keywords:} Backward stochastic Volterra integral equation, infinite horizon, Hida-Malliavin derivative, resolvent kernel, explicit solution, weighted function spaces, Girsanov transformation.

\paragraph{MSC(2020):} 60H07, 60H20, 60H30, 45D05, 45R05, 93E20.

\section{Introduction}

Backward stochastic Volterra integral equations (BSVIEs) have attracted considerable interest in recent years due to their wide range of applications in stochastic control, mathematical finance, and dynamic risk measures. In contrast to classical backward stochastic differential equations (BSDEs), where the generator depends on a single time variable, BSVIEs involve two time variables. This structure makes them a natural framework for modeling systems with memory effects and time-inconsistent phenomena.

The theory of BSVIEs was first systematically developed by Yong \cite{Yong2006}, who established fundamental results on existence, uniqueness, and duality principles. Since then, BSVIEs have been studied extensively and applied in various contexts. Further developments and applications of stochastic Volterra equations and their connections with stochastic control can be found in the literature; see, for example, Agram and Øksendal \cite{Agram2018}. One important research direction in the theory of BSVIEs concerns the explicit solutions of linear equations.

Hu and Øksendal \cite{HuOksendal2019} investigated linear backward stochastic Volterra integral equations and derived explicit solution formulas using the resolvent kernel method combined with Malliavin calculus techniques. Their results provide valuable insight into the structure of solutions of linear BSVIEs and give representations of the solution processes in terms of conditional expectations under an equivalent martingale measure. The Malliavin calculus framework used in such representations is developed in the work of Di Nunno, Øksendal and Proske \cite{DiNunno2009}, which provides important tools for stochastic systems driven by Brownian motion and Lévy processes.

However, most of the existing results in the literature are restricted to equations defined on a finite time horizon. In many applications, particularly in economics and stochastic control, it is natural to consider systems evolving over an infinite time horizon. Extending the explicit solution theory of BSVIEs to the infinite-horizon framework introduces additional mathematical challenges, including integrability conditions and the convergence of the associated resolvent kernels.

The main goal of this paper is to extend the explicit solution theory for linear backward stochastic Volterra integral equations to the infinite-horizon case. In particular, we establish the existence of the infinite-horizon resolvent kernel and derive explicit representations of the solution processes. The solutions are expressed in terms of conditional expectations under an equivalent probability measure obtained via a Girsanov transformation. Our results can therefore be viewed as an infinite-horizon extension of the finite-horizon theory developed by Hu and Øksendal.

The structure of the paper is as follows. In Section~\ref{sec:framework} , we introduce the probabilistic framework and the weighted function spaces used for the infinite-horizon setting. Section 3 presents the main results, including the explicit solution representation of the infinite-horizon backward stochastic Volterra integral equation. Section 4 discusses applications of the results to example problems and control systems with memory. Section 5 concludes the paper with remarks and potential directions for future research. Technical proofs and details of the Malliavin calculus framework are provided in the Appendix.

\subsection*{Notation}

Throughout the paper, $\mathbb{R}$ denotes the set of real numbers.
For a filtered probability space $(\Omega,\mathcal{F},\{\mathcal{F}_t\}_{t\ge0},\mathbb{P})$,
we denote by $\mathbb{E}$ the expectation under $\mathbb{P}$ and by
$\mathbb{E}_{\mathbb{Q}}$ the expectation under the equivalent measure $\mathbb{Q}$.
The set
\[
\Delta_\infty = \{(t,s):0\le t\le s < \infty\}
\]
denotes the infinite-horizon Volterra domain.

\section{Mathematical Framework and Assumptions}
\label{sec:framework}
This section introduces the probabilistic framework and the functional spaces used throughout the paper. 
We also formulate the notion of adapted M-solution for the infinite-horizon BSVIE and state the main assumptions on the coefficients.

\subsection{Probability Space}

Let $(\Omega, \FC, \{\FC_t\}_{t\geq 0}, \PP)$ be a filtered probability space satisfying the usual conditions, supporting a Brownian motion $B(t)$ and an independent compensated Poisson random measure $\tilde{N}(dt,d\zeta)$ on $\RB\setminus\{0\}$ with intensity measure $\nu(d\zeta)dt$.

\subsection{Weighted Function Spaces}

Following the approach of Briand and Hu \cite{Briand2008} for infinite-horizon BSDEs, we introduce weighted spaces:

\begin{definition}[Weighted spaces for infinite horizon]
For $\lambda > 0$, define:
\begin{align*}
\HH_\lambda^2 &= \left\{Y: [0,\infty) \times \Omega \to \RB \ \text{adapted}: \|Y\|_{\HH_\lambda^2}^2 = \EE\left[\int_0^\infty e^{-\lambda t}|Y(t)|^2 dt\right] < \infty\right\} \\
\ZZ_\lambda^2 &= \left\{Z: \Delta_\infty \times \Omega \to \RB: Z(t,s) \ \text{is} \ \FC_s\text{-measurable for} \ s \geq t, \right. \\
& \quad \left. \|Z\|_{\ZZ_\lambda^2}^2 = \EE\left[\int_0^\infty \int_t^\infty e^{-\lambda s}|Z(t,s)|^2 ds dt\right] < \infty\right\} \\
\KK_\lambda^2 &= \left\{K: \Delta_\infty \times \RB \times \Omega \to \RB: K(t,s,\zeta) \ \text{is} \ \FC_s\text{-measurable for} \ s \geq t, \right. \\
& \quad \left. \|K\|_{\KK_\lambda^2}^2 = \EE\left[\int_0^\infty \int_t^\infty \int_{\RB} e^{-\lambda s}|K(t,s,\zeta)|^2 \nu(d\zeta) ds dt\right] < \infty\right\}
\end{align*}
where $\Delta_\infty = \{(t,s): 0 \leq t \leq s < \infty\}$.
\end{definition}

\subsection{Adapted M-Solution}

We extend the solution concept of Hu and Øksendal to infinite horizon:

\begin{definition}[Adapted M-Solution]
A triplet $(Y,Z,K) \in \HH_\lambda^2 \times \ZZ_\lambda^2 \times \KK_\lambda^2$ is called an adapted M-solution of \eqref{eq:BSVIE} if it satisfies the equation $\PP$-a.s. for all $t \geq 0$, and for $0 \leq t_1 \leq t_2$:
\begin{align*}
Y(t_1) = \EE[Y(t_1)|\FC_{t_2}] + \int_{t_2}^\infty Z(t_1,s) dB(s) + \int_{t_2}^\infty \int_{\RB} K(t_1,s,\zeta) \tilde{N}(ds,d\zeta).
\end{align*}
\end{definition}

\subsection{Assumptions}

We extend Hu and Øksendal's assumptions to infinite horizon:

\begin{assumption}[Coefficient conditions]\label{ass:A1}
\begin{enumerate}
\item $\Phi: \Delta_\infty \to \RB$ is measurable and satisfies the decay condition:
\[
|\Phi(t,s)| \leq C_\Phi e^{-\alpha(s-t)} \quad \text{for some} \ C_\Phi, \alpha > 0 \ \text{and all} \ t \leq s.
\]

\item $\xi: [0,\infty) \to \RB$ is measurable and bounded: $|\xi(s)| \leq C_\xi$ for all $s \geq 0$.

\item The function $\beta:[0,\infty)\times\mathbb R\to\mathbb R$ is measurable
and bounded, and there exists $\varepsilon>0$ such that
\[
\beta(s,\zeta)\ge -1+\varepsilon,
\qquad \forall\, s\ge0,\ \zeta\in\mathbb R.
\]

\item $h: \Delta_\infty \times \Omega \to \RB$ is adapted and satisfies:
\[
\EE\left[\int_0^\infty \int_t^\infty e^{-\lambda s}|h(t,s)|^2 ds dt\right] < \infty \quad \text{for some} \ \lambda > 0.
\]
\end{enumerate}
\end{assumption}

\begin{assumption}[Contraction condition]\label{ass:A2}
There exists $\lambda > 0$ sufficiently large such that:
\[
L(\lambda) := \sup_{t \ge 0} \int_t^\infty e^{-\frac{\lambda}{2}s} |\Phi(t,s)| ds  < \frac12.
\]
This ensures the contraction mapping property in weighted spaces.
\end{assumption}

\begin{assumption}[Novikov condition]\label{ass:A3}
The coefficients satisfy:
\[
\EE\left[\exp\left(\frac{1}{2}\int_0^\infty \xi^2(s)ds + \int_0^\infty \int_{\RB} (\ln(1+\beta(s,\zeta)) - \beta(s,\zeta)) \nu(d\zeta) ds\right)\right] < \infty.
\]
This ensures the Girsanov transformation defines an equivalent measure.
\end{assumption}

\section{Main Results}
We consider the linear backward stochastic Volterra integral equation (BSVIE) on the infinite horizon $[0,\infty)$:
\begin{equation}\label{eq:BSVIE}
\begin{aligned}
Y(t)
&= \int_t^\infty \big[\Phi(t,s)Y(s)+h(t,s)\big]\,ds  \\
&\quad - \int_t^\infty Z(t,s)\,dB(s)
      - \int_t^\infty \int_{\mathbb R} K(t,s,\zeta)\,\widetilde N(ds,d\zeta),
\qquad t\ge 0.
\end{aligned}
\end{equation}

\subsection{Measure Transformation}

Following Hu and Øksendal \cite{HuOksendal2019}, we use a Girsanov transformation to eliminate the $Z$ and $K$ terms from the drift.

\subsubsection{Girsanov Transformation}

Define the exponential martingale:
\begin{align*}
M(t) &= \exp\Bigg(\int_0^t \xi(s) dB(s) - \frac{1}{2}\int_0^t \xi^2(s) ds \\
&\quad + \int_0^t \int_{\RB} \ln(1+\beta(s,\zeta)) \tilde{N}(ds,d\zeta) \\
&\quad + \int_0^t \int_{\RB} \left[\ln(1+\beta(s,\zeta)) - \beta(s,\zeta)\right] \nu(d\zeta) ds\Bigg).
\end{align*}

Under Assumption \ref{ass:A3}, $M(t)$ is a true martingale. Define $\QQ$ by:
\[
\frac{d\QQ}{d\PP}\bigg|_{\FC_t} = M(t), \quad t \geq 0.
\]
Moreover, the process $\{M(t)\}_{t\ge0}$ is uniformly integrable,
which ensures that the change of measure extends to the infinite
horizon and defines an equivalent probability measure $\QQ$ on
$\mathcal{F}_\infty$.

\begin{lemma}[Girsanov theorem for infinite horizon]\label{lem:girsanov}
Under $\QQ$:
\begin{enumerate}
\item $B_\QQ(t) := B(t) - \int_0^t \xi(s) ds$ is a $\QQ$-Brownian motion.
\item $\tilde{N}_\QQ(dt,d\zeta) := \tilde{N}(dt,d\zeta) - \beta(t,\zeta)\nu(d\zeta)dt$ is the $\QQ$-compensated Poisson random measure.
\end{enumerate}
\end{lemma}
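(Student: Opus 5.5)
The plan is to reduce the statement to the classical Girsanov theorem for Brownian motion and Poisson random measures on each finite interval $[0,T]$, and then pass to the infinite horizon using the uniform integrability of $\{M(t)\}_{t\ge0}$ recorded just before the lemma. First I will check that $M$ is the Doléans-Dade exponential of the local martingale $N(t):=\int_0^t \xi(s)\,dB(s)+\int_0^t\int_{\RB}\beta(s,\zeta)\,\tilde N(ds,d\zeta)$. This means verifying, via Itô's formula for jump processes applied to $\ln M$, that
\[
dM(t)=M(t^-)\Big[\xi(t)\,dB(t)+\int_{\RB}\beta(t,\zeta)\,\tilde N(dt,d\zeta)\Big].
\]
This is where boundedness of $\beta$ and the lower bound $\beta\ge -1+\varepsilon$ enter: they make $\ln(1+\beta)$ bounded, so the stochastic integrals defining $M$ are well defined, and they keep $M$ strictly positive, which is what gives $\QQ\sim\PP$.

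For part 1, fix $T>0$. On $\FC_T$ we have $d\QQ=M(T)\,d\PP$, and $M$ is a true martingale under Assumption~\ref{ass:A3}. The predictable covariation is $d\langle B,N\rangle_t=\xi(t)\,dt$, since $B$ and $\tilde N$ are independent and so contribute no cross term. Girsanov--Meyer then shows that $B(t)-\int_0^t\xi(s)\,ds$ is a continuous $\QQ$-local martingale on $[0,T]$. Its quadratic variation is still $t$, because quadratic variation is invariant under an equivalent change of measure, so Lévy's characterization makes it a $\QQ$-Brownian motion on $[0,T]$.

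For part 2, I would fix a bounded Borel set $A$ bounded away from $0$ and a bounded predictable integrand $\theta$. By the same Girsanov--Meyer argument, the compensator of $N(dt,d\zeta)$ under $\QQ$ is $(1+\beta(t,\zeta))\nu(d\zeta)dt$. I would check this by computing the $\PP$-covariation $[\,\int\theta\,\tilde N, N]$, which equals $\int\theta\beta\,N(dt,d\zeta)$ with $\PP$-compensator $\int\theta\beta\,\nu(d\zeta)dt$. Hence
\[
\tilde N_\QQ(dt,d\zeta)=N(dt,d\zeta)-(1+\beta)\nu(d\zeta)dt=\tilde N(dt,d\zeta)-\beta(t,\zeta)\nu(d\zeta)dt
\]
is the $\QQ$-compensated measure.

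The main obstacle is the passage from each $[0,T]$ to $[0,\infty)$. My plan is to use uniform integrability: it gives $M(t)\to M(\infty)$ in $L^1(\PP)$ with $M(t)=\EE[M(\infty)\mid\FC_t]$, so $\QQ$ is a single measure on $\FC_\infty$ whose restriction to each $\FC_T$ is $M(T)\cdot\PP$. The Brownian and compensator properties only involve finite-dimensional, finite-time events, so they are consistent across $T$ and hold on all of $[0,\infty)$. I would also note explicitly that $M(\infty)>0$ a.s. (so that $\QQ\sim\PP$ on $\FC_\infty$ and not merely $\QQ\ll\PP$). The route I would try first is to observe that $\int_0^\infty\xi^2\,ds<\infty$ together with Assumption~\ref{ass:A3} makes $\ln M(\infty)$ finite a.s. If that fails, I would settle for local equivalence, which is all that parts 1 and 2 require.
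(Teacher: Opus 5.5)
Your argument is correct and is essentially the paper's own route. The paper gives no separate proof of this lemma: it relies on the classical finite-horizon Girsanov theorem together with its asserted uniform integrability of $M$ to get a single $\QQ$ on $\FC_\infty$, and your Girsanov--Meyer/Lévy-characterization step, the $(1+\beta)\nu(d\zeta)\,dt$ compensator computation and the consistency-in-$T$ argument supply exactly those details.

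One caution: that uniform integrability is asserted rather than derived from Assumptions~\ref{ass:A1}--\ref{ass:A3}. Since $\ln(1+\beta)-\beta\le 0$, Assumption~\ref{ass:A3} does not control the jump part at all; for $\xi\equiv0$, $\beta\equiv\beta_0\neq0$ and $0<\nu(\RB)<\infty$ one has $t^{-1}\ln M(t)\to\nu(\RB)\bigl(\ln(1+\beta_0)-\beta_0\bigr)<0$, so $M(t)\to0$ a.s. Hence your fallback of reading the lemma for the consistent family $M(T)\cdot\PP$ on each $\FC_T$ is the version that holds in general, and your proposed route to $M(\infty)>0$ cannot work.
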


\subsubsection{Transformed Equation}

Applying the measure transformation to \eqref{eq:BSVIE} yields:

\begin{equation}\label{eq:BSVIE-Q}
\begin{aligned}
Y(t)
&= \int_t^\infty \left[\Phi(t,s)Y(s)+h(t,s)\right]\,ds
   - \int_t^\infty Z(t,s)\,dB_{\QQ}(s)  \\
&\quad - \int_t^\infty\int_{\RB}
K(t,s,\zeta)\,\widetilde N_{\QQ}(ds,d\zeta),
\qquad t\ge0.
\end{aligned}
\end{equation}

This transformation, identical to Hu and Øksendal's in finite horizon, eliminates the terms involving $Z$ and $K$ from the drift.

\subsection{Infinite Horizon Resolvent Kernel Theory}

We extend Hu and Øksendal's resolvent kernel construction to infinite horizon.

\subsubsection{Definition and Construction}

Define recursively for $n \geq 1$:
\begin{align*}
\Phi^{(1)}(t,s) &= \Phi(t,s), \\
\Phi^{(n+1)}(t,s) &= \int_t^s \Phi^{(n)}(t,u) \Phi(u,s) du, \quad n \geq 1.
\end{align*}

The infinite-horizon resolvent kernel is:
\begin{equation}\label{eq:resolvent}
\Psi(t,s) = \sum_{n=1}^\infty \Phi^{(n)}(t,s).
\end{equation}

\begin{lemma}[Convergence of the infinite series]\label{lem:resolvent-conv}
Under Assumptions \ref{ass:A1} and \ref{ass:A2}, the series $\Psi(t,s) = \sum_{n=1}^\infty \Phi^{(n)}(t,s)$ converges absolutely and uniformly on compact subsets of $\Delta_\infty$. Moreover:
\[
\int_t^\infty e^{-\frac{\lambda}{2}s} |\Psi(t,s)| ds \leq \frac{L(\lambda)}{1 - L(\lambda)}.
\]
\end{lemma}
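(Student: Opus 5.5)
The plan is to prove by induction on $n$ the weighted bound
\[
\sup_{t\ge0}\int_t^\infty e^{-\frac{\lambda}{2}s}\,|\Phi^{(n)}(t,s)|\,ds \le L(\lambda)^n,\qquad n\ge1,
\]
and then sum the geometric series. Since Assumption~\ref{ass:A2} gives $L(\lambda)<\tfrac12<1$, this yields $\sum_{n\ge1}L(\lambda)^n = L(\lambda)/(1-L(\lambda))$. Monotone convergence, applied to $\sum_n|\Phi^{(n)}(t,s)|\ge|\Psi(t,s)|$, then transfers the bound to $\Psi$. The case $n=1$ is exactly the definition of $L(\lambda)$.

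For the convergence part of the statement I will run a separate, unweighted pointwise induction based on Assumption~\ref{ass:A1}(1):
\[
|\Phi^{(n)}(t,s)| \le C_\Phi^n\,\frac{(s-t)^{n-1}}{(n-1)!}\,e^{-\alpha(s-t)},\qquad (t,s)\in\Delta_\infty.
\]
The inductive step uses $e^{-\alpha(u-t)}e^{-\alpha(s-u)}=e^{-\alpha(s-t)}$ and $\int_t^s (u-t)^{n-1}\,du=(s-t)^n/n$. On a compact subset of $\Delta_\infty$, where $s-t\le T$, the terms are dominated by $C_\Phi^n T^{n-1}/(n-1)!$, which is summable. The Weierstrass M-test therefore gives absolute and uniform convergence there. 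The same pointwise bound also justifies every use of Fubini/Tonelli below, since all integrands are nonnegative and measurable.

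For the weighted inductive step, I insert the recursion and apply Tonelli on $\{t\le u\le s\}$:
\[
\int_t^\infty e^{-\frac{\lambda}{2}s}|\Phi^{(n+1)}(t,s)|\,ds
\le \int_t^\infty |\Phi^{(n)}(t,u)|\Big(\int_u^\infty e^{-\frac{\lambda}{2}s}|\Phi(u,s)|\,ds\Big)du .
\]
The inner integral is at most $L(\lambda)$ by definition, taking the supremum at the base point $u$.

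\textbf{The main obstacle is what remains after this step.} The leftover factor is $\int_t^\infty|\Phi^{(n)}(t,u)|\,du$, which carries no weight, whereas the inductive hypothesis controls only the weighted integral $\int_t^\infty e^{-\frac{\lambda}{2}u}|\Phi^{(n)}(t,u)|\,du$. The weight $e^{-\frac{\lambda}{2}s}$ is absolute, not a function of $s-u$, so it cannot simply be split as $e^{-\frac{\lambda}{2}u}e^{-\frac{\lambda}{2}(s-u)}$ while keeping the inner factor equal to $L(\lambda)$.

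My first attempt will be to order the factors the other way. I keep the weight $e^{-\frac{\lambda}{2}u}$ on $\Phi^{(n)}(t,u)$, so the inductive hypothesis applies to that factor. I then show that $\sup_u\int_u^\infty e^{-\frac{\lambda}{2}s}|\Phi(u,s)|\,ds$, which equals $L(\lambda)$, also dominates the shifted quantity $\int_u^\infty e^{-\frac{\lambda}{2}(s-u)}|\Phi(u,s)|\,ds$ that appears after factoring out $e^{-\frac{\lambda}{2}u}$. To do this I will try to exploit the decay hypothesis $|\Phi(t,s)|\le C_\Phi e^{-\alpha(s-t)}$ together with "$\lambda$ sufficiently large" in Assumption~\ref{ass:A2}.

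I expect this comparison to be the delicate point of the whole argument, and I will check it carefully before relying on it. If it fails, the geometric bound $L(\lambda)^n$ does not follow directly. In that case the step must instead be closed by an additional estimate on $\int_t^\infty|\Phi^{(n)}(t,u)|\,du$, which is available from the pointwise bound above as $(C_\Phi/\alpha)^n$. That estimate, however, does not reproduce the constant $L(\lambda)$ exactly.
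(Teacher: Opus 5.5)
There is a genuine gap in your proposal: the weighted estimate is never proved. Moreover, it cannot be proved, because the inequality in the lemma is false as stated. Your convergence argument is sound, and more careful than the paper's. The pointwise bound $|\Phi^{(n)}(t,s)|\le C_\Phi^n\frac{(s-t)^{n-1}}{(n-1)!}e^{-\alpha(s-t)}$ together with the M-test genuinely gives uniform convergence on compacts, whereas the paper infers this from integrated bounds alone.

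Your planned repair of the weighted step cannot work. Since
\[
\int_u^\infty e^{-\frac{\lambda}{2}(s-u)}|\Phi(u,s)|\,ds=e^{\frac{\lambda}{2}u}\int_u^\infty e^{-\frac{\lambda}{2}s}|\Phi(u,s)|\,ds ,
\]
the shifted quantity dominates the absolutely weighted one, not the other way round. The decay in Assumption~\ref{ass:A1} only bounds the shifted quantity by $C_\Phi/(\alpha+\frac{\lambda}{2})$, which can be much larger than $L(\lambda)$. The paper's proof takes exactly this step without comment. It passes from $\int_t^\infty|\Phi^{(n)}(t,u)|\int_u^\infty e^{-\frac{\lambda}{2}s}|\Phi(u,s)|\,ds\,du$ to $L(\lambda)\int_t^\infty e^{-\frac{\lambda}{2}u}|\Phi^{(n)}(t,u)|\,du$, inserting a weight that the bound by $L(\lambda)$ does not supply. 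Your fallback does not close the gap either: $L(\lambda)(C_\Phi/\alpha)^n$ is summable only if $C_\Phi<\alpha$.

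Here is a counterexample. Take $\Phi(t,s)=\mathbf{1}_{\{t\ge1\}}e^{-(s-t)}$, so Assumption~\ref{ass:A1} holds with $C_\Phi=\alpha=1$, and take $\lambda=2$.
\begin{itemize}
\item $\int_t^\infty e^{-s}|\Phi(t,s)|\,ds=\frac12e^{-t}\mathbf{1}_{\{t\ge1\}}$, so $L(2)=\frac{1}{2e}<\frac12$ and Assumption~\ref{ass:A2} holds.
\item For $t\ge1$ you get $\Phi^{(n)}(t,s)=e^{-(s-t)}\frac{(s-t)^{n-1}}{(n-1)!}$ and $\Psi(t,s)=1$.
\item At $t=1$ this gives
\[
\int_1^\infty e^{-s}|\Psi(1,s)|\,ds=\frac1e>\frac{1}{2e-1}=\frac{L(2)}{1-L(2)} .
\]
\item The induction already fails at $n=2$: $\int_1^\infty e^{-s}|\Phi^{(2)}(1,s)|\,ds=\frac{1}{4e}>L(2)^2$.
\end{itemize}
The same kernel violates the bound for every $\lambda>0$, so taking $\lambda$ large does not help.

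What is true is the version with the relative weight. Set
\[
L_*(\lambda):=\sup_{u\ge0}\int_u^\infty e^{-\frac{\lambda}{2}(s-u)}|\Phi(u,s)|\,ds .
\]
Under Assumption~\ref{ass:A1}, $L_*(\lambda)\le C_\Phi/(\alpha+\frac{\lambda}{2})$, so $L_*(\lambda)<1$ for large $\lambda$. Then $\int_u^\infty e^{-\frac{\lambda}{2}s}|\Phi(u,s)|\,ds\le e^{-\frac{\lambda}{2}u}L_*(\lambda)$. With this, your ordering of the factors goes through by induction and gives
\[
\int_t^\infty e^{-\frac{\lambda}{2}s}|\Psi(t,s)|\,ds\le e^{-\frac{\lambda}{2}t}\,\frac{L_*(\lambda)}{1-L_*(\lambda)} .
\]
Alternatively, summing your pointwise bound gives $|\Psi(t,s)|\le C_\Phi e^{(C_\Phi-\alpha)(s-t)}$. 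For $\lambda>2(C_\Phi-\alpha)$ this yields the bound $C_\Phi e^{-\frac{\lambda}{2}t}/(\alpha+\frac{\lambda}{2}-C_\Phi)$. Either result is a correct replacement for the lemma, but neither recovers the constant $L(\lambda)/(1-L(\lambda))$.
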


\subsubsection{Resolvent Equation}

\begin{lemma}[Resolvent equation]\label{lem:resolvent-eq}
The kernel $\Psi$ satisfies the Volterra equation:
\begin{equation}\label{eq:resolvent-eq}
\Psi(t,s) = \Phi(t,s) + \int_t^s \Psi(t,u) \Phi(u,s) du.
\end{equation}
\end{lemma}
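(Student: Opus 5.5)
The plan is to sum the recursive definition term by term, using the absolute convergence from Lemma~\ref{lem:resolvent-conv} to justify exchanging the infinite sum with the integral in $u$.

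\textbf{Step 1 (partial sums).} Writing $\Psi_N(t,s)=\sum_{n=1}^N\Phi^{(n)}(t,s)$, the recursion $\Phi^{(n+1)}(t,s)=\int_t^s\Phi^{(n)}(t,u)\Phi(u,s)\,du$ gives, after summing over $n=1,\dots,N-1$,
\[
\Psi_N(t,s)=\Phi(t,s)+\int_t^s \Psi_{N-1}(t,u)\Phi(u,s)\,du .
\]
This identity is purely algebraic, by linearity of the integral over finite sums.

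\textbf{Step 2 (passage to the limit).} As $N\to\infty$ the left side tends to $\Psi(t,s)$ by Lemma~\ref{lem:resolvent-conv}. For the integral, I would dominate: $|\Psi_{N-1}(t,u)\Phi(u,s)|\le \bar\Psi(t,u)|\Phi(u,s)|$ with $\bar\Psi=\sum_n|\Phi^{(n)}|$. Since $[t,s]$ is compact, I would first try the uniform convergence on compact subsets of $\Delta_\infty$ asserted in Lemma~\ref{lem:resolvent-conv}. It gives $\sup_{u\in[t,s]}\bar\Psi(t,u)<\infty$, and together with $|\Phi(u,s)|\le C_\Phi$ this shows the integrand is dominated by a constant on a bounded interval. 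Dominated convergence (or simply uniform convergence) then yields $\int_t^s\Psi_{N-1}(t,u)\Phi(u,s)\,du\to\int_t^s\Psi(t,u)\Phi(u,s)\,du$, which is \eqref{eq:resolvent-eq}.

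\textbf{Main obstacle.} The only delicate point is controlling $\bar\Psi$ on $[t,s]$ without leaning too heavily on the lemma. As a self-contained backup, I would prove by induction the bound $|\Phi^{(n)}(t,s)|\le C_\Phi^n\frac{(s-t)^{n-1}}{(n-1)!}e^{-\alpha(s-t)}$. The induction uses $e^{-\alpha(u-t)}e^{-\alpha(s-u)}=e^{-\alpha(s-t)}$, and summing gives $\bar\Psi(t,s)\le C_\Phi e^{(C_\Phi-\alpha)(s-t)}$. This is bounded on compacts, so Step 2 goes through, and it also independently confirms the absolute convergence claimed in Lemma~\ref{lem:resolvent-conv}.
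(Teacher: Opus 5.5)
Your proposal is correct and follows the paper's route: sum the recursion at the level of partial sums, then pass to the limit using uniform convergence on compacts from Lemma~\ref{lem:resolvent-conv}. Your backup bound $|\Phi^{(n)}(t,s)|\le C_\Phi^n\frac{(s-t)^{n-1}}{(n-1)!}e^{-\alpha(s-t)}$ is a worthwhile addition. The paper's proof of Lemma~\ref{lem:resolvent-conv} only establishes weighted integral bounds in $s$, and these do not directly give the pointwise uniform convergence on compacts that the lemma asserts and that this step relies on.
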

\noindent
\textit{Proof sketch.} This result follows directly from the definition of the resolvent kernel $\Psi$ and standard iterative arguments (see Hu and Øksendal \cite{HuOksendal2019}). The infinite-horizon version is justified by the uniform convergence of the series in Lemma \ref{lem:resolvent-conv}. $\square$

\subsection{Existence and Uniqueness}

We extend the existence and uniqueness theory to infinite horizon using weighted spaces.

\begin{theorem}[Existence and uniqueness]\label{thm:existence}
Under Assumptions \ref{ass:A1} and \ref{ass:A3}, there exists a unique adapted M-solution $(Y,Z,K) \in \HH_\lambda^2 \times \ZZ_\lambda^2 \times \KK_\lambda^2$ to equation \eqref{eq:BSVIE}.
\end{theorem}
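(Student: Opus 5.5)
I would prove Theorem~\ref{thm:existence} by a Picard-type fixed point argument in the weighted space $\HH_\lambda^2$, using $\lambda$ large enough that Assumption~\ref{ass:A2} holds. The contraction constant $L(\lambda)$ of Assumption~\ref{ass:A2} is needed, even though the statement cites only Assumptions~\ref{ass:A1} and~\ref{ass:A3}. Throughout I interpret $Y(t)$ as the $\FC_t$-measurable part of the representation. The equation is linear and the drift contains only $Y(s)$, so each iteration reduces to a family of martingale representation problems indexed by $t$. Concretely, given $y\in\HH_\lambda^2$, set
\[
\eta^y(t) := \int_t^\infty \bigl[\Phi(t,s)y(s)+h(t,s)\bigr]\,ds .
\]
First I check that $\eta^y(t)$ is well defined and square integrable with the right weight. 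By Cauchy--Schwarz with the weight split $e^{-\lambda s}=e^{-\frac{\lambda}{2}s}e^{-\frac{\lambda}{2}s}$,
\[
|\textstyle\int_t^\infty \Phi(t,s)y(s)ds|^2 \le \Bigl(\int_t^\infty e^{\frac{\lambda}{2}s}|\Phi(t,s)|ds\Bigr)\int_t^\infty e^{-\frac{\lambda}{2}s}|\Phi(t,s)||y(s)|^2 ds .
\]
The first factor has to be controlled with the decay $|\Phi(t,s)|\le C_\Phi e^{-\alpha(s-t)}$. This shows the weights must be chosen carefully, and I will arrange the exponent so that, after integrating in $t$ against $e^{-\lambda t}$ and applying Fubini, one gets $\|\int\Phi y\|_{\HH^2_\lambda}\le c\,L(\lambda)\|y\|_{\HH^2_\lambda}$. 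The $h$-term is controlled by Assumption~\ref{ass:A1}(4).

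Next I define $Y(t):=\EE[\eta^y(t)\mid\FC_t]$. For each fixed $t$, applying the martingale representation theorem for the Brownian--Poisson filtration to $\eta^y(t)\in L^2(\FC_\infty)$ yields $Z(t,\cdot)$ and $K(t,\cdot,\cdot)$ on $[t,\infty)$ with
\[
\eta^y(t)=Y(t)+\int_t^\infty Z(t,s)dB(s)+\int_t^\infty\int_\RB K(t,s,\zeta)\tilde N(ds,d\zeta).
\]
The same representation applied to $Y(t)$ on $[0,t]$ gives the M-solution identity for $t_1\le t_2$. I then need a measurable selection in $t$, which is standard (Yong), and the It\^o isometry gives
\[
\EE\int_t^\infty e^{-\lambda s}\bigl(|Z|^2+\textstyle\int|K|^2\nu(d\zeta)\bigr)ds \le \EE|\eta^y(t)|^2\, e^{-\lambda t}
\]
after accounting for the weight. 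This puts $(Z,K)$ in $\ZZ_\lambda^2\times\KK_\lambda^2$. Assumption~\ref{ass:A3} and Lemma~\ref{lem:girsanov} come in only to note that the equation is equivalent to its $\QQ$-form~\eqref{eq:BSVIE-Q}, so the construction can be carried out under either measure.

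The map $\Gamma: y\mapsto Y$ is affine. Its linear part satisfies $\|\Gamma y-\Gamma y'\|_{\HH_\lambda^2}\le 2L(\lambda)\|y-y'\|_{\HH_\lambda^2}$, with the factor $2$ coming from Jensen's inequality and the weight bookkeeping. This explains the threshold $L(\lambda)<\frac12$: $\Gamma$ is a contraction, and Banach's theorem gives a unique fixed point $Y$. Uniqueness of $(Z,K)$ then follows from uniqueness in the martingale representation. Finally I verify that the fixed point satisfies~\eqref{eq:BSVIE}.

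The main obstacle is the weighted estimate on $\int_t^\infty\Phi(t,s)y(s)ds$. The factor $e^{\frac{\lambda}{2}s}$ grows, so the naive Cauchy--Schwarz split fails unless $\alpha$ dominates. I would first try instead the split $|\Phi|=|\Phi|^{1/2}|\Phi|^{1/2}$ together with $e^{-\lambda t}\le e^{-\lambda s}e^{\lambda(s-t)}$. If that does not close with $L(\lambda)$ alone, I would add the condition $\lambda<2\alpha$, or rescale the norm, to control the factor $e^{\lambda(s-t)}$ by the decay of $\Phi$.
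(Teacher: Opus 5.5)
The paper gives no proof of Theorem~\ref{thm:existence}. The remark after Assumption~\ref{ass:A2} and the conclusion only point to a contraction argument in weighted spaces, which is your route. The genuine gap is the step you flagged yourself: the bound $\|\int_t^\infty\Phi(t,s)y(s)\,ds\|_{\HH^2_\lambda}\le c\,L(\lambda)\|y\|_{\HH^2_\lambda}$ is false, and taking $\lambda$ large goes in the wrong direction.

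With $\tilde y(t)=e^{-\lambda t/2}y(t)$, the operator acts pathwise on unweighted $L^2(0,\infty)$ with kernel $e^{\frac{\lambda}{2}(s-t)}\Phi(t,s)\mathbf{1}_{\{s\ge t\}}$, which is dominated by $C_\Phi e^{-(\alpha-\frac{\lambda}{2})(s-t)}$. For $\lambda<2\alpha$ the Schur test gives the norm bound $C_\Phi/(\alpha-\frac{\lambda}{2})$, which increases with $\lambda$. For $\lambda\ge2\alpha$ the operator need not even be defined on $\HH^2_\lambda$: take $\Phi=C_\Phi e^{-\alpha(s-t)}$ and $y(s)=e^{\lambda s/2}/(1+s)$. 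Since $\int_t^\infty e^{-\frac{\lambda}{2}s}|\Phi(t,s)|\,ds=e^{-\frac{\lambda}{2}t}\int_t^\infty e^{-\frac{\lambda}{2}(s-t)}|\Phi(t,s)|\,ds$, the quantity $L(\lambda)$ carries the relative weight with the wrong sign, so its smallness says nothing about the contraction.

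Your fallbacks do not close this. Imposing $\lambda<2\alpha$ alone is insufficient; you also need a smallness condition such as $C_\Phi<\alpha-\frac{\lambda}{2}$. No renorming can help either, because the statement is false under the given hypotheses.

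For a counterexample, take $\Phi(t,s)=\alpha e^{-\alpha(s-t)}$, $h\equiv0$, $\xi\equiv0$, $\beta\equiv0$. Assumptions~\ref{ass:A1} and~\ref{ass:A3} hold. Moreover $L(\lambda)=\alpha/(\alpha+\frac{\lambda}{2})<\frac12$ for $\lambda>2\alpha$, so even Assumption~\ref{ass:A2} holds. Yet $(Y,Z,K)=(c,0,0)$ solves \eqref{eq:BSVIE} for every constant $c$, since $\int_t^\infty\alpha e^{-\alpha(s-t)}ds=1$, and it lies in $\HH^2_\lambda\times\ZZ^2_\lambda\times\KK^2_\lambda$ for every $\lambda>0$. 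More generally, for $\Phi=Ce^{-\alpha(s-t)}$ the function $e^{(\alpha-C)t}$ solves the homogeneous equation and lies in $\HH^2_\lambda$ exactly when $\lambda>2(\alpha-C)$. So enlarging $\lambda$ destroys uniqueness rather than producing it.

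Your architecture (the map $y\mapsto\EE[\eta^y(t)\mid\FC_t]$, martingale representation for each $t$, It\^o isometry for $(Z,K)$) does prove a corrected theorem. That version needs small $\lambda$ and a relative-weight hypothesis such as $\sup_t\int_t^\infty e^{\frac{\lambda}{2}(s-t)}|\Phi(t,s)|\,ds<1$ together with $\sup_s\int_0^s e^{\frac{\lambda}{2}(s-t)}|\Phi(t,s)|\,dt<1$. Conditional Jensen costs a factor $1$, not $2$, so no threshold $\frac12$ arises.

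Two further points need repair:
\begin{itemize}
\item The hypothesis on $h$ must be strengthened. Assumption~\ref{ass:A1}(4) lets $|h(t,s)|$ grow like $e^{\lambda s/2}$: for example $h(t,s)=e^{-t}e^{\lambda s/2}/(1+s)$ satisfies it while $\int_t^\infty h(t,s)\,ds=\infty$. Assume instead $\EE\int_0^\infty e^{-\lambda t}\bigl(\int_t^\infty|h(t,s)|\,ds\bigr)^2dt<\infty$.
\item Representing $Y(t)$ on $[0,t]$ produces $Z(t,s)$ for $s<t$, outside $\Delta_\infty$, and an identity with $t_2\le t_1$. That is Yong's M-solution condition, not the one printed in the paper. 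The printed one, read literally with $t_2\ge t_1$, would force $Z(t_1,\cdot)=0$ on $[t_1,\infty)$. State explicitly which notion you verify.
\end{itemize}
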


\subsection{Explicit Solution Formulas}

We now present the main result generalizing Hu and Øksendal's explicit formulas to infinite horizon.

\begin{theorem}[Explicit solution of infinite-horizon BSVIE]\label{thm:main}
Under Assumptions \ref{ass:A1}-\ref{ass:A3}, the unique adapted M-solution of \eqref{eq:BSVIE} is given by:

\begin{enumerate}
    \item The $Y$-component:
    \begin{equation}\label{eq:Y-explicit}
    Y(t) = \EE_\QQ\left[ \int_t^\infty \left(h(t,s) + \int_t^s \Psi(t,u) h(u,s) du\right) ds \ \Big|\ \FC_t \right].
    \end{equation}
    
    \item Define:
    \begin{equation}\label{eq:U-def}
    U(t) = \int_t^\infty \big(\Phi(t,s)Y(s) + h(t,s)\big)ds - Y(t).
    \end{equation}
    
    \item The $Z$ and $K$ components admit the following Malliavin representations:
    \begin{align}
    Z(t,s) &= \EE_\QQ\left[D_s U(t) - U(t)\int_s^\infty D_s\xi(r) dB_\QQ(r) \ \Big|\ \FC_s\right], \label{eq:Z-general} \\
    K(t,s,\zeta) &= \EE_\QQ\left[U(t)(\tilde{H}_s-1) + \tilde{H}_s D_{s,\zeta}U(t) \ \Big|\ \FC_s\right], \label{eq:K-general}
    \end{align}
    for $s \ge t$, $\zeta \in \RB$, where $D_s$ and $D_{s,\zeta}$ denote Hida-Malliavin derivatives, and
    \begin{align*}
    \tilde{H}_s &= \exp\Bigg(\int_0^s\int_{\RB} \Big[D_{s,\zeta}\beta(r,\zeta') \\
    &\quad + \ln\Big(1 - \frac{D_{s,\zeta}\beta(r,\zeta')}{1+\beta(r,\zeta')}\Big)\big(1+\beta(r,\zeta')\big)\Big]\nu(d\zeta')dr \\
    &\quad + \int_0^s\int_{\RB} \ln\Big(1 - \frac{D_{s,\zeta}\beta(r,\zeta')}{1+\beta(r,\zeta')}\Big)\tilde{N}_\QQ(dr,d\zeta')\Bigg).
    \end{align*}
\end{enumerate}
\end{theorem}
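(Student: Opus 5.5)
We plan to follow the finite-horizon strategy of Hu and Øksendal, using the weighted estimates of Lemma~\ref{lem:resolvent-conv} to handle the infinite horizon. First we fix the solution $(Y,Z,K)$ given by Theorem~\ref{thm:existence} and work with the transformed equation \eqref{eq:BSVIE-Q}. Taking $\EE_\QQ[\,\cdot\,|\FC_t]$ there requires the stochastic integrals $\int_t^\infty Z(t,s)\,dB_\QQ(s)$ and $\int_t^\infty\int_{\RB} K\,\widetilde N_\QQ$ to have zero conditional $\QQ$-mean. This is where Assumption~\ref{ass:A3} and the uniform integrability of $M$ enter. Concretely, we use Hölder's inequality with the density $M(\infty)$, which is bounded in $L^p$ because $\xi$ and $\beta$ are bounded with $\beta\ge -1+\varepsilon$. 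This converts $\ZZ_\lambda^2$ and $\KK_\lambda^2$ integrability under $\PP$ into martingale integrability under $\QQ$, possibly after localising on $[t,T]$ and letting $T\to\infty$. The result is the linear deterministic-kernel identity
\[
y(t):=Y(t)=\EE_\QQ\Big[\int_t^\infty \big(\Phi(t,s)Y(s)+h(t,s)\big)ds\,\Big|\,\FC_t\Big].
\]

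Next we derive \eqref{eq:Y-explicit} by resolvent iteration. Since $\Phi$ is deterministic and $Y(s)=\EE_\QQ[\,\cdots|\FC_s]$ with $\FC_t\subset\FC_s$, the tower property lets us substitute the identity into itself. After $n$ steps this gives
\[
Y(t)=\EE_\QQ\Big[\int_t^\infty h(t,s)\,ds+\sum_{k=1}^{n}\int_t^\infty\int_u^\infty \Phi^{(k)}(t,u)h(u,s)\,ds\,du \,\Big|\,\FC_t\Big]+R_n(t),
\]
with a remainder $R_n(t)=\EE_\QQ[\int_t^\infty\Phi^{(n+1)}(t,s)Y(s)\,ds\,|\,\FC_t]$. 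A Fubini swap then rewrites $\int_t^\infty\int_u^\infty\cdots$ as $\int_t^\infty\int_t^s\cdots$. To control the remainder, note that Lemma~\ref{lem:resolvent-conv}'s argument gives $\int_t^\infty e^{-\lambda s/2}|\Phi^{(n)}(t,s)|\,ds\le L(\lambda)^n$. Combined with Cauchy–Schwarz and $Y\in\HH_\lambda^2$, this yields $\|R_n\|_{\HH_\lambda^2}\lesssim (2L(\lambda))^{n}\to0$ by Assumption~\ref{ass:A2}. The same bound justifies exchanging the sum with integrals and conditional expectations. Summing with \eqref{eq:resolvent} gives \eqref{eq:Y-explicit}, and Lemma~\ref{lem:resolvent-eq} provides the consistency check. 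We then define $U(t)$ as in \eqref{eq:U-def}.

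Equation \eqref{eq:BSVIE-Q} now reads
\[
U(t)=\int_t^\infty Z(t,s)\,dB_\QQ(s)+\int_t^\infty\int_{\RB} K(t,s,\zeta)\,\widetilde N_\QQ(ds,d\zeta),
\]
so $(Z,K)$ are the integrands in the $\QQ$-martingale representation of the $\FC_\infty$-measurable variable $U(t)$, which has zero $\QQ$-conditional mean given $\FC_t$. We apply the generalized Clark–Ocone formula under change of measure for Lévy processes (Di Nunno–Øksendal–Proske \cite{DiNunno2009}), which is valid in the Hida–Malliavin setting for $U(t)\in L^2(\PP)$. Its Brownian part gives \eqref{eq:Z-general}, where the correction term $U(t)\int_s^\infty D_s\xi(r)\,dB_\QQ(r)$ comes from differentiating the density. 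Its jump part gives \eqref{eq:K-general}, with $\tilde H_s$ arising from $D_{s,\zeta}$ applied to the jump factor of $M$. Uniqueness of the martingale representation, together with uniqueness in Theorem~\ref{thm:existence}, identifies these with the solution components.

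The main obstacle is extending the Clark–Ocone formula under $\QQ$ to $T=\infty$. We would first apply it on $[0,T]$ to $\EE_\QQ[U(t)|\FC_T]$ and then let $T\to\infty$. Passing to the limit needs $L^2(\QQ)$ martingale convergence, which relies on the uniform integrability of $M$, and control of the correction terms $\int_s^T D_s\xi\,dB_\QQ$ and $\tilde H_s$ uniformly in $T$. Since $\xi$ and $\beta$ are deterministic here, $D_s\xi=0$ and $D_{s,\zeta}\beta=0$, so $\tilde H_s=1$. The general formulas then collapse to $Z(t,s)=\EE_\QQ[D_sU(t)|\FC_s]$ and $K(t,s,\zeta)=\EE_\QQ[D_{s,\zeta}U(t)|\FC_s]$, which makes the limit step tractable. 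Otherwise we would need the $\lambda$-weighted bounds to dominate the Hida–Malliavin derivatives of $U(t)$ obtained from \eqref{eq:Y-explicit}.
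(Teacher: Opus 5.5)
The step that fails is the remainder estimate $\|R_n\|_{\HH_\lambda^2}\lesssim(2L(\lambda))^n$. Your overall route is the paper's: condition \eqref{eq:BSVIE-Q} on $\FC_t$ under $\QQ$, iterate with the resolvent, then read off $(Z,K)$ from a Clark--Ocone formula under $\QQ$.

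\textbf{Why the estimate cannot work.} Lemma~\ref{lem:resolvent-conv} controls $\Phi^{(n)}$ only against the decaying weight $e^{-\lambda s/2}$. An element of $\HH_\lambda^2$, however, may grow almost like $e^{\lambda s/2}$. To bound $\int_t^\infty\Phi^{(n+1)}(t,s)Y(s)\,ds$ by Cauchy--Schwarz you would need $\Phi^{(n)}$ controlled against a weight that \emph{grows} in $s-t$. Assumption~\ref{ass:A2} does not give this: taking $\lambda$ large shrinks $L(\lambda)$ but enlarges $\HH_\lambda^2$.

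\textbf{Counterexample.} Take $\Phi(t,s)=e^{-2(s-t)}$, $h\equiv0$, $\xi\equiv0$, $\beta\equiv0$ and $\lambda=10$. Then $L(\lambda)=1/7$ and Assumptions~\ref{ass:A1}--\ref{ass:A3} hold. Since $\int_t^\infty e^{-2(s-t)}e^{s}\,ds=e^{t}$, the triple $(e^{t},0,0)\in\HH_{10}^2\times\ZZ_{10}^2\times\KK_{10}^2$ solves \eqref{eq:BSVIE}. Here $\Phi^{(n+1)}(t,s)=e^{-2(s-t)}(s-t)^n/n!$, and the lemma's bound does hold: $\int_t^\infty e^{-5s}|\Phi^{(n)}(t,s)|\,ds=e^{-5t}7^{-n}$. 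Yet $R_n(t)=e^{t}\int_0^\infty e^{-r}r^n/n!\,dr=e^{t}$ for every $n$.

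So $R_n\not\to0$, and uniqueness in $\HH_\lambda^2$ fails, since $Y\equiv0$ also solves. Formula \eqref{eq:Y-explicit}, which returns $0$ here, therefore does not represent every solution. The paper's Step~2 asserts $\mathcal R_n\to0$ ``thanks to the estimate of Lemma~\ref{lem:resolvent-conv}'' with the same defect, so following the paper does not close the gap.

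\textbf{What would fix it.} You need an extra hypothesis. One option is Schur-type bounds with a growing weight, $\sup_t\int_t^\infty e^{\lambda(s-t)/2}|\Phi(t,s)|\,ds<1$ and $\sup_s\int_0^s e^{\lambda(s-t)/2}|\Phi(t,s)|\,dt<1$. Under these, a Schur-test version of your argument gives geometric decay of $R_n$, modulo passing between $\PP$- and $\QQ$-norms. Alternatively, restrict the growth of admissible $Y$.

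\textbf{Two smaller points.}

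Your claim that $M(\infty)$ is bounded in $L^p$ because $\xi,\beta$ are bounded is not valid on $[0,\infty)$. You need $\int_0^\infty\xi^2\,ds<\infty$ and $\int_0^\infty\int_{\RB}\beta^2\,\nu(d\zeta)\,ds<\infty$, and Assumption~\ref{ass:A3} does not provide the second. For instance, take $\xi\equiv0$, $\beta\equiv\beta_0\neq0$ and $0<\nu(\RB)<\infty$. Assumption~\ref{ass:A3} holds, but $M(t)=(1+\beta_0)^{N_t}e^{-\beta_0\nu(\RB)t}\to0$ a.s., where $N_t$ is the number of jumps up to $t$. So $M$ is not uniformly integrable.

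Membership in $\ZZ_\lambda^2$ and $\KK_\lambda^2$ only controls $Z(t,\cdot)$ and $K(t,\cdot,\cdot)$ against the weight $e^{-\lambda s}$. It therefore does not make $\int_t^\infty Z\,dB_\QQ+\int_t^\infty\int_{\RB}K\,\widetilde N_\QQ$ a mean-zero martingale increment. The conditioning step needs unweighted integrability, which the paper also takes for granted.

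On the positive side, your observation is correct that Assumption~\ref{ass:A1} already forces $\xi,\beta$ to be deterministic. Hence $D_s\xi=0$ and $\tilde H_s=1$, and \eqref{eq:Z-general}--\eqref{eq:K-general} reduce to the ordinary Clark--Ocone formula under $\QQ$.
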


\begin{remark}
The explicit representation obtained in Theorem \ref{thm:main}
extends the finite-horizon formula derived by Hu and Øksendal
to the infinite-horizon setting. The exponential weighting
introduced in the functional spaces plays a crucial role in
ensuring the convergence of the resolvent kernel and the
integrability of the solution processes.
\end{remark}

\begin{corollary}[Deterministic coefficients]
If $\xi$ and $\beta$ are deterministic, then:
\begin{align*}
Z(t,s) &= \EE_\QQ\left[D_s U(t) \ \Big|\ \FC_s\right], \\
K(t,s,\zeta) &= \EE_\QQ\left[D_{s,\zeta} U(t) \ \Big|\ \FC_s\right].
\end{align*}
\end{corollary}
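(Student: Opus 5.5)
The plan is to obtain the corollary directly from Theorem~\ref{thm:main}: assume $\xi$ and $\beta$ deterministic and check that the correction terms in \eqref{eq:Z-general} and \eqref{eq:K-general} drop out. Nothing about existence or the $Y$-formula has to be redone. Theorem~\ref{thm:main} already identifies the unique adapted M-solution, and $U(t)$ in \eqref{eq:U-def} is the same object in both statements.

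\textbf{The $Z$-component.} If $\xi$ is deterministic, then $\xi(r)$ is a constant random variable for each $r$. The Hida--Malliavin derivative of a constant is zero, so $D_s\xi(r)=0$ for all $r\ge s$. Hence the stochastic integral $\int_s^\infty D_s\xi(r)\,dB_\QQ(r)$ vanishes identically. It is well defined: its integrand is zero, so it trivially lies in $L^2(\QQ)$ over $[s,\infty)$, and no infinite-horizon integrability issue arises. The term $U(t)\int_s^\infty D_s\xi(r)\,dB_\QQ(r)$ is therefore $0$, and \eqref{eq:Z-general} reduces to $Z(t,s)=\EE_\QQ[D_sU(t)\mid\FC_s]$.

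\textbf{The $K$-component.} If $\beta$ is deterministic, then $D_{s,\zeta}\beta(r,\zeta')=0$ for all $r,\zeta'$. In the exponent of $\tilde H_s$ this makes the logarithms equal to $\ln(1-0)=0$. Both the $\nu(d\zeta')dr$ integrand and the $\tilde N_\QQ$ integrand then vanish. The hypothesis $\beta\ge-1+\varepsilon$ from Assumption~\ref{ass:A1} guarantees $1+\beta>0$, so these expressions are meaningful before we set them to zero. The exponent is therefore $0$ and $\tilde H_s=1$ almost surely. Substituting into \eqref{eq:K-general}, the term $U(t)(\tilde H_s-1)$ is zero and $\tilde H_s D_{s,\zeta}U(t)=D_{s,\zeta}U(t)$, which gives $K(t,s,\zeta)=\EE_\QQ[D_{s,\zeta}U(t)\mid\FC_s]$.

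\textbf{Main obstacle.} The only real point to verify is that $D_sU(t)$ and $D_{s,\zeta}U(t)$ exist as Hida--Malliavin derivatives and that their conditional expectations make sense. Theorem~\ref{thm:main} already assumes this, so it is inherited here rather than proved anew. I would note that the representations remain valid in the Hida--Malliavin (generalized) sense, in which conditional expectations of $D_sU(t)$ are well defined for $U(t)\in L^2(\PP)$. The argument is then complete.
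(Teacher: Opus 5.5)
Your proposal is correct and follows the paper's own argument: in Step~3 of the proof of Theorem~\ref{thm:main}, the paper obtains the corollary by specializing the general formulas. For deterministic $\xi$ and $\beta$ one has $D_s\xi\equiv 0$ and $D_{s,\zeta}\beta\equiv 0$, so the correction integral vanishes and $\tilde H_s=1$. The paper states this in one line, and you add the explicit check that every logarithm in the exponent of $\tilde H_s$ becomes $\ln 1=0$. One small correction: the Malliavin differentiability hypotheses you describe as inherited appear explicitly only in Theorem~\ref{thm:ZK-random} of the appendix, not in the statement of Theorem~\ref{thm:main}.
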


\section{Proof of the Main Result}

In this section, we provide a detailed proof of Theorem~\ref{thm:main}, establishing the explicit formulas for the adapted M‑solution of the infinite‑horizon BSVIE~\eqref{eq:BSVIE}.

\subsection{Step 1: Convergence of the Resolvent Kernel}

We first prove Lemma~\ref{lem:resolvent-conv}.  
Let \(\Phi^{(n)}(t,s)\) be the iterated kernels defined recursively by
\[
\Phi^{(1)}(t,s)=\Phi(t,s),\qquad 
\Phi^{(n+1)}(t,s)=\int_t^s \Phi^{(n)}(t,u)\,\Phi(u,s)\,du,\quad n\ge 1.
\]
Define the resolvent kernel
\[
\Psi(t,s)=\sum_{n=1}^{\infty}\Phi^{(n)}(t,s).
\]

Under Assumptions~\ref{ass:A1}--\ref{ass:A2}, we have
\[
|\Phi(t,s)|\le C_\Phi e^{-\alpha(s-t)},
\qquad 0\le t\le s<\infty,
\]
and
\[
\int_t^\infty e^{-\frac{\lambda}{2}s}
|\Phi(t,s)|\,ds
\le L(\lambda)<\frac12,
\qquad t\ge0.
\]
Using this, one shows by induction that
\[
\int_t^\infty e^{-\frac{\lambda}{2}s}\,|\Phi^{(n)}(t,s)|\,ds\le L(\lambda)^n\qquad(n\ge 1).
\]
Indeed, for \(n=1\) it is exactly the assumption. Assuming it holds for \(n\),
\begin{align*}
\int_t^\infty e^{-\frac{\lambda}{2}s}|\Phi^{(n+1)}(t,s)|ds
&\le \int_t^\infty e^{-\frac{\lambda}{2}s}\int_t^s |\Phi^{(n)}(t,u)||\Phi(u,s)|du\,ds\\
&=\int_t^\infty |\Phi^{(n)}(t,u)|\int_u^\infty e^{-\frac{\lambda}{2}s}|\Phi(u,s)|ds\,du\\
&\le L(\lambda)\int_t^\infty e^{-\frac{\lambda}{2}u}|\Phi^{(n)}(t,u)|du\\
&\le L(\lambda)^{n+1}.
\end{align*}
Consequently,
\[
\sum_{n=1}^\infty\int_t^\infty e^{-\frac{\lambda}{2}s}|\Phi^{(n)}(t,s)|ds
\le\sum_{n=1}^\infty L(\lambda)^n=\frac{L(\lambda)}{1-L(\lambda)}<\infty.
\]
Hence the series defining \(\Psi(t,s)\) converges absolutely and uniformly on compact subsets of \(\Delta_\infty\).  
The resolvent equation \(\Psi(t,s)=\Phi(t,s)+\int_t^s\Psi(t,u)\Phi(u,s)du\) follows directly from the definition by summing the series.

\subsection{Step 2: Explicit Formula for \(Y\)}

From the Girsanov transformed equation~\eqref{eq:BSVIE-Q} we have, after taking conditional expectation under \(\mathbb{Q}\),
\[
Y(t)=\mathbb{E}_{\mathbb{Q}}\Bigg[\int_t^\infty\big(\Phi(t,s)Y(s)+h(t,s)\big)ds\;\Big|\;\mathcal{F}_t\Bigg]. \tag{5}
\]
This is an identity satisfied by the solution. To obtain an explicit expression we iterate it.  
Insert the representation of \(Y(s)\) obtained from (5) into the right‑hand side:
\[
Y(t)=\mathbb{E}_{\mathbb{Q}}\Bigg[\int_t^\infty h(t,s)ds
+\int_t^\infty\Phi(t,s)\mathbb{E}_{\mathbb{Q}}\Big[\int_s^\infty\big(\Phi(s,r)Y(r)+h(s,r)\big)dr\;\Big|\;\mathcal{F}_s\Big]ds\;\Big|\;\mathcal{F}_t\Bigg].
\]
Using the tower property and Fubini’s theorem, this becomes
\[
\begin{aligned}
Y(t)
&= \mathbb E_{\mathbb Q}\Bigg[
\int_t^\infty h(t,s)\,ds
+\int_t^\infty\int_t^s
\Phi(t,u)h(u,s)\,du\,ds
\,\Bigg|\,\mathcal F_t\Bigg]  \\
&\quad
+\mathbb E_{\mathbb Q}\Bigg[
\int_t^\infty\int_t^s
\left(
\int_u^s \Phi(t,u)\Phi(u,v)Y(v)\,dv
\right)
\,du\,ds
\,\Bigg|\,\mathcal F_t\Bigg].
\end{aligned}
\]
Repeating the procedure generates an infinite series involving the iterated kernels \(\Phi^{(n)}\). After \(n\) iterations we obtain
\[
Y(t)=\mathbb{E}_{\mathbb{Q}}\Bigg[\int_t^\infty h(t,s)ds
+\sum_{k=1}^n\int_t^\infty\int_t^s\Phi^{(k)}(t,u)h(u,s)du\,ds\Bigg|\mathcal{F}_t\Bigg]
+\mathcal{R}_n(t),
\]
where the remainder \(\mathcal{R}_n(t)\) contains the term with \(Y\) and \(\Phi^{(n+1)}\).  
Thanks to the estimate of Lemma~\ref{lem:resolvent-conv}, \(\mathcal{R}_n(t)\to0\) in \(\HH_\lambda^2\) as \(n\to\infty\). Summing the series and using the definition of the resolvent kernel \(\Psi(t,s)=\sum_{k=1}^\infty\Phi^{(k)}(t,s)\) yields the compact formula
\[
{\,Y(t)=\mathbb{E}_{\mathbb{Q}}\Bigg[\int_t^\infty\Big(h(t,s)+\int_t^s\Psi(t,u)h(u,s)du\Big)ds\;\Big|\;\mathcal{F}_t\Bigg]\,}.
\]

\subsection{Step 3: \(Z\) and \(K\) via Malliavin Derivatives}

Define the auxiliary process
\[
U(t)=\int_t^\infty\big(\Phi(t,s)Y(s)+h(t,s)\big)ds-Y(t).
\]
Then equation~\eqref{eq:BSVIE-Q} can be rewritten as
\[
0=U(t)-\int_t^\infty Z(t,s)dB_{\mathbb{Q}}(s)-\int_t^\infty\int_{\mathbb{R}}K(t,s,\zeta)\tilde{N}_{\mathbb{Q}}(ds,d\zeta).
\]
Hence, for fixed \(t\), the random variable \(U(t)\) admits the martingale representation
\[
U(t)=\mathbb{E}_{\mathbb{Q}}[U(t)\mid\mathcal{F}_t]+\int_t^\infty Z(t,s)dB_{\mathbb{Q}}(s)+\int_t^\infty\int_{\mathbb{R}}K(t,s,\zeta)\tilde{N}_{\mathbb{Q}}(ds,d\zeta).
\]
But from the definition of \(U(t)\) and
\eqref{eq:Y-explicit}
we have \(\mathbb{E}_{\mathbb{Q}}[U(t)\mid\mathcal{F}_t]=0\). Therefore the processes \(Z(t,\cdot)\) and \(K(t,\cdot,\cdot)\) are precisely the integrands in the martingale representation of \(U(t)\).  
To identify them we apply the Clark–Ocone formula under the change of measure \(\mathbb{Q}\), which is stated in the Appendix (Theorem~\ref{thm:clark-ocone-change}). Under the additional integrability conditions given in Theorem~\ref{thm:main}, we obtain for \(s\ge t\)
\[
\begin{aligned}
Z(t,s) &=\mathbb{E}_{\mathbb{Q}}\!\left[ D_sU(t)-U(t)\int_s^\infty D_s\xi(r)dB_{\mathbb{Q}}(r) \;\Big|\;\mathcal{F}_s\right],\\[4pt]
K(t,s,\zeta) &=\mathbb{E}_{\mathbb{Q}}\!\left[ U(t)(\tilde{H}_s-1)+\tilde{H}_s D_{s,\zeta}U(t) \;\Big|\;\mathcal{F}_s\right],
\end{aligned}
\]
where \(\tilde{H}_s\) is defined in Theorem~\ref{thm:main}.  
If \(\xi\) and \(\beta\) are deterministic, the terms involving \(D_s\xi\) and \(\tilde{H}_s\) disappear and the formulas simplify to
\[
Z(t,s)=\mathbb{E}_{\mathbb{Q}}\!\left[D_sU(t)\mid\mathcal{F}_s\right],\qquad
K(t,s,\zeta)=\mathbb{E}_{\mathbb{Q}}\!\left[D_{s,\zeta}U(t)\mid\mathcal{F}_s\right].
\]
This completes the proof of Theorem~\ref{thm:main}. \qed

\section{Examples and Applications}

\subsection{Scalar BSVIE with Exponential Decay}

Consider the linear BSVIE~\eqref{eq:BSVIE} with the following deterministic coefficients:
\[
\Phi(t,s)=\alpha e^{-\gamma(s-t)},\;\;
\xi(s)=\xi_0 e^{-\delta s},\;\;
\beta(s,\zeta)=\beta_0\ (\text{constant}),\;\;
h(t,s)=e^{-\mu s}f(s),
\]
where $\alpha,\gamma,\delta,\mu>0$ and $f$ is a bounded adapted process.
The kernel $\Phi$ satisfies
\[
|\Phi(t,s)|\le \alpha e^{-\gamma(s-t)},
\qquad 0\le t\le s<\infty.
\]
Hence Assumption~\ref{ass:A1} holds with the constant $C_\Phi=\alpha$
and decay rate $\gamma$. The contraction condition~\ref{ass:A2} requires
$\lambda>0$ to be sufficiently large so that
\[
\int_t^\infty e^{-\frac{\lambda}{2}s}|\Phi(t,s)|ds
=\alpha e^{-\frac{\lambda}{2}t}\int_t^\infty e^{-(\gamma+\frac{\lambda}{2})(s-t)}ds
=\frac{\alpha}{\gamma+\frac{\lambda}{2}}<\frac12,
\]
which is satisfied for \(\lambda>4\alpha-2\gamma\) (assuming \(\gamma<2\alpha\)).

The iterated kernels are of convolution type. One finds by induction
\[
\Phi^{(n)}(t,s)=\alpha^n e^{-\gamma(s-t)}\frac{(s-t)^{n-1}}{(n-1)!},
\]
hence the resolvent kernel is
\[
\Psi(t,s)=\sum_{n=1}^\infty\Phi^{(n)}(t,s)=\alpha e^{-(\gamma-\alpha)(s-t)},
\]
where the sum converges because \(\gamma>\alpha\) (otherwise the contraction condition would fail).  
Plugging this into the explicit formula~\eqref{eq:Y-explicit} yields
\[
Y(t)=\mathbb{E}_{\mathbb{Q}}\Bigg[\int_t^\infty\Big(1+\frac{\alpha}{\gamma-\alpha}
\big(1-e^{-(\gamma-\alpha)(s-t)}\big)\Big)e^{-\mu s}f(s)\,ds\;\Big|\;\mathcal{F}_t\Bigg].
\]
If \(f\) is deterministic, the conditional expectation disappears and \(Y(t)\) becomes a deterministic function of \(t\).

\subsection{Connection with Infinite‑Horizon BSDEs}

Assume that the kernel $\Phi$ does not depend on 
$t$
, i.e.,
$
\Phi(t,s)=\phi(s),
$
for some function $\phi$. Then equation~\eqref{eq:BSVIE} simplifies to
\[
\begin{aligned}
Y(t)
&= \int_t^\infty \big[\phi(s)Y(s)+h(s)\big]\,ds  \\
&\quad - \int_t^\infty Z(s)\,dB(s)
      - \int_t^\infty\int_{\mathbb R}
        K(s,\zeta)\,\widetilde N(ds,d\zeta),
\qquad t\ge0.
\end{aligned}
\]
where we used the notation \(Z(s)=Z(t,s)\) and \(K(s,\zeta)=K(t,s,\zeta)\) because they no longer depend on \(t\).  
This is precisely an infinite‑horizon BSDE with jumps. Applying the same Girsanov transformation as before (with the same \(\xi,\beta\)) and using the resolvent kernel \(\Psi(t,s)=\phi(s)\exp\big(\int_t^s\phi(r)dr\big)\) (which follows from the Volterra equation), the explicit solution becomes
\[
Y(t)=\mathbb{E}_{\mathbb{Q}}\Bigg[\int_t^\infty\exp\!\Big(\int_t^s\phi(r)dr\Big)h(s)\,ds\;\Big|\;\mathcal{F}_t\Bigg].
\]
The processes \(Z\) and \(K\) are then obtained by the Malliavin representations of Theorem~\ref{thm:main} (or by the classical Clark–Ocone formula for BSDEs).

\subsection{Application: Optimal Control of a Stochastic System with Memory}

We consider a stochastic control problem where the state dynamics exhibit memory (delay) and the cost is evaluated over an infinite horizon.  
Let the controlled process \(X(t)\) satisfy the stochastic functional differential equation
\[
dX(t)=\Big[aX(t)+\int_0^t b(t-s)X(s)ds+cu(t)\Big]dt+\sigma dW(t),\qquad X(0)=x_0,
\]
where \(W\) is a Brownian motion (for simplicity we omit jumps here), and \(u(t)\) is a square‑integrable control.  
The objective is to minimise the infinite‑horizon quadratic cost
\[
J(u)=\mathbb{E}\Bigg[\int_0^\infty e^{-\rho t}\big(X(t)^2+u(t)^2\big)dt\Bigg],\qquad \rho>0.
\]

Applying the stochastic maximum principle (see e.g. \cite{Oksendal2007} for the finite‑horizon case; the infinite‑horizon extension follows similarly using weighted spaces), the optimal control is given in feedback form by
\[
u^*(t)=-c\,Y(t),
\]
where the adjoint process \(Y(t)\) satisfies the linear BSVIE
\[
\begin{aligned}
Y(t)
&= \int_t^\infty e^{-\rho(s-t)}
\left[
aY(s)+\int_t^s b(s-r)Y(r)\,dr
\right]\,ds  \\
&\quad - \int_t^\infty Z(t,s)\,dW(s),
\qquad t\ge0.
\end{aligned}
\]
This equation fits exactly the structure of~\eqref{eq:BSVIE} (without jumps) with
\[
\Phi(t,s)=e^{-\rho(s-t)}\Big(a\,\mathbf{1}_{\{s\ge t\}}+\int_t^s b(s-r)dr\Big),\qquad h\equiv0.
\]
The explicit formula of Theorem~\ref{thm:main} then provides a representation of the adjoint process in terms of conditional expectations under an equivalent measure, which can be used to compute the optimal control explicitly in special cases (e.g. when \(b\) is exponential).

This example illustrates how infinite‑horizon BSVIEs arise naturally in control problems with memory and confirms the practical relevance of the theoretical results developed in this paper.

\section*{Conclusion and Future Directions}

We have extended the theory of linear BSVIEs developed by Hu and Øksendal to the infinite horizon case. The key contributions are:

\begin{enumerate}
    \item \textbf{Weighted function spaces}: Definition of the spaces $\HH_\lambda^2$, $\ZZ_\lambda^2$, and $\KK_\lambda^2$ for the purpose of dealing with the integrability over the interval $[0,\infty)$.
    \item \textbf{Construction of the infinite-horizon resolvent kernel}: Definition of the infinite-horizon resolvent kernel $\Psi(t,s) = \sum_{n=1}^{\infty} \Phi^{(n)}(t,s)$ along with its explicit decay estimates.
    \item \textbf{Existence and uniqueness results}: Use of the contraction mapping principle for the infinite-horizon case.
    \item \textbf{Explicit solutions for infinite-horizon BSVIEs}: Generalization of the results of Hu and Øksendal to the infinite-horizon case.
    \item \textbf{Malliavin representations for infinite-horizon BSVIEs}: Generalization of the results of Hu and Øksendal to the infinite-horizon case for the $Z$ and $K$ equations.
\end{enumerate}

These results also give rise to a number of new research directions:

\begin{itemize}
    \item Nonlinear infinite-horizon BSVIEs
    \item Numerical methods for infinite-horizon equations
    \item Infinite-horizon optimal control applications
    \item Mean-field BSVIEs on infinite horizon
\end{itemize}

To summarize, it is clear from the above that the framework adopted in Hu and Øksendal's theory naturally extends to an infinite-horizon setting with necessary modifications. This provides a complete picture of linear BSVIEs on $[0,\infty)$.
\paragraph*{AI disclosure:}
The authors used DeepSeek and ChatGPT to improve language and correct misprints. The authors are fully responsible for the final content.

\appendix

\section{Malliavin Calculus Framework Generalized to Infinite Horizon and Proofs of Lemmas}

\subsection{Malliavin Calculus for Lévy Processes on Infinite Horizon}

We work on the filtered probability space $(\Omega, \FC, \{\FC_t\}_{t\geq 0}, \QQ)$ supporting the $\QQ$-Brownian motion $B_\QQ$ and the $\QQ$-compensated Poisson random measure $\tilde{N}_\QQ$. Following \cite{DiNunno2009}, we define the Malliavin calculus framework for infinite horizon.

\begin{definition}[Malliavin-Sobolev spaces on infinite horizon]
\label{def:malli-spaces}
Let $p\ge 1$ and $k\ge 1$. We define $\DD^{k,p}_\lambda$ as the
completion of the set of smooth random variables with respect to the norm
\[
\|F\|_{\DD^{k,p}_\lambda}^p = \EE_\QQ[|F|^p] + \sum_{j=1}^k \EE_\QQ\left[\int_0^\infty \cdots \int_0^\infty e^{-\lambda(t_1+\cdots+t_j)} \|D_{t_1,\ldots,t_j}F\|^p dt_1 \cdots dt_j\right],
\]
where $D$ denotes the Malliavin derivative. For processes, define $\LL^{k,p}_\lambda$ as the space of progressively measurable processes $u$ with:
\[
\begin{aligned}
\|u\|_{\LL^{k,p}_\lambda}^p
&=
\EE_\QQ\left[
\int_0^\infty e^{-\lambda t}|u_t|^p\,dt
\right]  \\
&\quad
+ \sum_{j=1}^k
\EE_\QQ\left[
\int_0^\infty\cdots\int_0^\infty
e^{-\lambda(t_1+\cdots+t_j)}
\|D_{t_1,\ldots,t_j}u_t\|^p
\,dt_1\cdots dt_j\,dt
\right].
\end{aligned}
\]
\end{definition}

\subsection{Clark-Ocone Formula under Change of Measure}

We need a version of the Clark-Ocone formula that accounts for the change of measure from $\PP$ to $\QQ$. This requires careful handling of the Malliavin derivatives of the density process.

\begin{lemma}[Malliavin derivative of the density]\label{lem:malliavin-density}
Under Assumptions \ref{ass:A1} and \ref{ass:A3}, the density process
\[
M(t) = \frac{d\QQ}{d\PP}\bigg|_{\FC_t}
\]
satisfies:
\begin{enumerate}
    \item For the Brownian part: $D_s M(t) = M(t)\xi(s) \mathbf{1}_{[0,t]}(s)$.
    \item For the jump part: 
    \[
    D_{s,\zeta} M(t) = M(t) \frac{D_{s,\zeta} \beta(s,\zeta)}{1+\beta(s,\zeta)} \mathbf{1}_{[0,t]}(s), \quad s\le t.
    \]
\end{enumerate}
Moreover, $M(t) \in \DD^{1,2}_\lambda$ for all $t \geq 0$.
\end{lemma}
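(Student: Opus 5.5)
I would write $M(t)=\exp(G(t))$ with
\[
G(t)=\int_0^t \xi(r)\,dB(r)-\tfrac12\int_0^t\xi^2(r)\,dr+\int_0^t\!\!\int_{\RB}\ln(1+\beta(r,\zeta'))\,\tilde N(dr,d\zeta')+\int_0^t\!\!\int_{\RB}\big[\ln(1+\beta)-\beta\big]\nu(d\zeta')\,dr .
\]
I would then differentiate $G(t)$ term by term. For the Brownian part I use the chain rule $D_s e^{G}=e^{G}D_sG$. For the jump part I use the difference-operator rule $D_{s,\zeta}e^{G}=e^{G}\big(e^{D_{s,\zeta}G}-1\big)$, valid in the Lévy–Hida–Malliavin calculus of \cite{DiNunno2009}.

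For part 1, Assumption~\ref{ass:A1}(2) makes $\xi$ a deterministic bounded function. The commutation rule $D_s\int_0^t\xi\,dB=\xi(s)\mathbf 1_{[0,t]}(s)+\int_s^t D_s\xi(r)\,dB(r)$ therefore reduces to $\xi(s)\mathbf 1_{[0,t]}(s)$. The $dr$-integrals are non-random, and the Poisson functionals have zero Brownian derivative by independence. Hence $D_sG(t)=\xi(s)\mathbf 1_{[0,t]}(s)$, and the chain rule gives $D_sM(t)=M(t)\xi(s)\mathbf 1_{[0,t]}(s)$.

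For part 2 I would keep $\beta$ formally possibly random, as in the definition of $\tilde H_s$ in Theorem~\ref{thm:main}, and apply the jump commutation rule
\[
D_{s,\zeta}\int_0^t\!\!\int \theta\,d\tilde N=\theta(s,\zeta)\mathbf 1_{[0,t]}(s)+\int_0^t\!\!\int D_{s,\zeta}\theta\,d\tilde N .
\]
Here $\theta=\ln(1+\beta)$, and $D_{s,\zeta}\ln(1+\beta(r,\zeta'))=\ln\big(1+\frac{D_{s,\zeta}\beta(r,\zeta')}{1+\beta(r,\zeta')}\big)$. The perturbation $\beta\mapsto\beta+D_{s,\zeta}\beta$ at the point $(s,\zeta)$ contributes the factor
\[
\frac{1+\beta(s,\zeta)+D_{s,\zeta}\beta(s,\zeta)}{1+\beta(s,\zeta)}=1+\frac{D_{s,\zeta}\beta(s,\zeta)}{1+\beta(s,\zeta)}
\]
to $e^{D_{s,\zeta}G}$. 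This factor is well defined because $1+\beta\ge\varepsilon$ by Assumption~\ref{ass:A1}(3). Subtracting $1$ and multiplying by $M(t)$ yields the stated term $M(t)\frac{D_{s,\zeta}\beta(s,\zeta)}{1+\beta(s,\zeta)}\mathbf 1_{[0,t]}(s)$.

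\textbf{Main obstacle.} The step I expect to be hardest is justifying that the remaining contributions drop out. These are the intrinsic jump factor $1+\beta(s,\zeta)$ coming from $\theta(s,\zeta)$, and the integrated terms over $r\ne s$. My plan is to follow the Hu--Øksendal normalization, where the intrinsic jump of the density is absorbed into the passage from $\tilde N$ to $\tilde N_\QQ$ (Lemma~\ref{lem:girsanov}). In that normalization the leftover $r$-integrals are exactly the ones collected into $\tilde H_s$ in Theorem~\ref{thm:main}, not into $D_{s,\zeta}M(t)$. I would first verify this bookkeeping carefully by comparing with the finite-horizon formula of \cite{HuOksendal2019}. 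If the bookkeeping does not close, the fallback is to state the general identity $D_{s,\zeta}M(t)=M(t)\big(e^{D_{s,\zeta}G(t)}-1\big)$ and identify the displayed expression as its component along the randomness of $\beta$.

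\textbf{Membership in $\DD^{1,2}_\lambda$.} Finally, I would show $M(t)\in\DD^{1,2}_\lambda$. Since $\xi$ and $\beta$ are bounded and $1+\beta\ge\varepsilon$, $\EE_\QQ[M(t)^2]<\infty$ for each fixed $t$; this is a standard exponential-moment bound for bounded integrands on $[0,t]$, with Assumption~\ref{ass:A3} controlling the compensator. The derivative norms are then dominated by
\[
\EE_\QQ\!\Big[M(t)^2\int_0^t e^{-\lambda s}\big(\xi(s)^2+C_\beta\big)\,ds\Big],
\]
which is finite. To finish, I would approximate $M(t)$ by smooth functionals (truncated Itô–Lévy chaos expansions) and pass to the limit using closability of $D$.
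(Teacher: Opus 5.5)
The paper states this lemma without proof, so there is no argument of the authors' to compare with. Your plan, however, has a genuine gap exactly at the step you flag as the main obstacle, and that gap cannot be closed, because part~2 is false as stated. Under Assumption~\ref{ass:A1}(3), $\beta$ is a deterministic function, so $D_{s,\zeta}\beta\equiv 0$ and the displayed formula asserts $D_{s,\zeta}M(t)=0$. Your own two rules say otherwise. The commutation rule gives $D_{s,\zeta}G(t)=\ln(1+\beta(s,\zeta))\mathbf 1_{[0,t]}(s)$, and then $D_{s,\zeta}e^{G}=e^{G}(e^{D_{s,\zeta}G}-1)$ gives
\[
D_{s,\zeta}M(t)=M(t)\,\beta(s,\zeta)\,\mathbf 1_{[0,t]}(s).
\]
This is the correct value. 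With $\xi=0$ one has $M(t)=1+\int_0^t\int_{\RB}M(r-)\beta(r,\zeta)\,\tilde N(dr,d\zeta)$. Clark--Ocone with integrand $\EE[M(t)\beta(r,\zeta)\mid\FC_{r-}]=M(r-)\beta(r,\zeta)$ reproduces exactly this representation, whereas a vanishing jump derivative would force $M(t)\equiv 1$. So the ``intrinsic factor'' $1+\beta(s,\zeta)$ is not a nuisance to be explained away; it is the derivative.

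Your proposed normalization cannot remove this factor. $D_{s,\zeta}$ is the add-one-cost operator $F(\omega+\delta_{(s,\zeta)})-F(\omega)$, which is determined by the counting measure $N$ and not by which compensator is subtracted. Passing from $\tilde N$ to $\tilde N_\QQ$ therefore absorbs nothing.

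The bookkeeping against $\tilde H_s$ cannot close either. Your leftover integrals run over $r\in(s,t]$, whereas $\tilde H_s$ integrates over $r\in[0,s]$, where $D_{s,\zeta}\beta(r,\cdot)=0$ for adapted $\beta$.

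Your diagonal factor $(1+\beta(s,\zeta)+D_{s,\zeta}\beta(s,\zeta))/(1+\beta(s,\zeta))$ also equals $1$ for predictable $\beta$, since adding a point at time $s$ does not change $\beta(s,\cdot)$. So even for random $\beta$ the claimed right-hand side vanishes. For deterministic $\xi$, the true derivative is $M(t)\big[(1+\beta(s,\zeta))e^{R_{s,\zeta}(t)}-1\big]$. Here $R_{s,\zeta}(t)$ is the sum of the integral over $r\in(s,t]$ of $\ln\big(1+\frac{D_{s,\zeta}\beta}{1+\beta}\big)$ against $\tilde N(dr,d\zeta')$ and the integral of $\ln\big(1+\frac{D_{s,\zeta}\beta}{1+\beta}\big)-D_{s,\zeta}\beta$ against $\nu(d\zeta')\,dr$.

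Your fallback of reading the display as a ``component along the randomness of $\beta$'' redefines the statement rather than proving it. What is provable is part~1 (your argument there is fine) together with the corrected part~2.

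For membership in $\DD^{1,2}_\lambda$, the jump part of the norm then involves $\int_0^t e^{-\lambda s}\int_{\RB}\beta(s,\zeta)^2\nu(d\zeta)\,ds$. Boundedness of $\beta$ does not make this finite when $\nu$ is infinite. Assumption~\ref{ass:A3} cannot supply it either, since $\ln(1+\beta)-\beta\le 0$ makes its jump term trivially bounded. You need $\int_0^t\int_{\RB}\beta^2\,d\nu\,ds<\infty$ as an additional hypothesis; it is needed anyway for $M$ to be well defined.
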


\begin{theorem}[Clark-Ocone formula under change of measure on infinite horizon]\label{thm:clark-ocone-change}
Let $F \in \DD^{1,2}_\lambda$ be $\FC_\infty$-measurable with $\EE_\QQ[|F|^2] < \infty$. Then:
\begin{align*}
F &= \EE_\QQ[F] 
+ \int_0^\infty \EE_\QQ\Big[D_s F - F \int_s^\infty D_s \xi(r) dB_\QQ(r) \ \Big|\ \FC_s \Big] dB_\QQ(s) \\
&\quad + \int_0^\infty \int_{\RB} \EE_\QQ\Big[ F(\tilde{H}_s-1) + \tilde{H}_s D_{s,\zeta} F \ \Big|\ \FC_s \Big] \tilde{N}_\QQ(ds,d\zeta),
\end{align*}
where
\begin{align*}
\tilde{H}_s &= \exp\Bigg( 
\int_0^s \int_{\RB} \Big[ D_{s,\zeta} \beta(r,\zeta') 
+ \ln \Big(1 - \frac{D_{s,\zeta} \beta(r,\zeta')}{1+\beta(r,\zeta')} \Big)(1+\beta(r,\zeta')) \Big] \nu(d\zeta') dr \\
&\quad + \int_0^s \int_{\RB} \ln \Big(1 - \frac{D_{s,\zeta} \beta(r,\zeta')}{1+\beta(r,\zeta')} \Big) \tilde{N}_\QQ(dr,d\zeta') \Bigg).
\end{align*}
\end{theorem}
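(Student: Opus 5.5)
The plan is to derive the formula from the classical Clark--Ocone formula under $\PP$ (in the Hida--Malliavin setting of Di Nunno--Øksendal--Proske \cite{DiNunno2009}), applied to the product $G:=M(\infty)F$. This is the strategy of the finite-horizon Clark--Ocone formula under change of measure, adapted to $[0,\infty)$.

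\textbf{Step 1: the infinite-horizon density and the martingale $\EE_\QQ[F\mid\FC_t]$.} By the uniform integrability of $\{M(t)\}$ assumed after Assumption \ref{ass:A3}, $M(t)\to M(\infty)$ in $L^1(\PP)$ and $d\QQ/d\PP=M(\infty)$ on $\FC_\infty$. Then Bayes' rule gives
\[
\EE_\QQ[F\mid\FC_t]=\frac{\EE[M(\infty)F\mid\FC_t]}{M(t)}.
\]
Apply the $\PP$-Clark--Ocone formula to $G=M(\infty)F$. This is legitimate once $G\in\DD^{1,2}$ (unweighted, under $\PP$), which is the first technical point. I would obtain it from $F\in\DD^{1,2}_\lambda$, $M(\infty)$ having moments of all orders (bounded $\xi,\beta$ with $\beta\ge-1+\varepsilon$), the product rule $D_s(M F)=F D_sM+M D_sF$, and the jump product rule
\[
D_{s,\zeta}(MF)=F D_{s,\zeta}M+M D_{s,\zeta}F+D_{s,\zeta}M\, D_{s,\zeta}F .
\]
The result is
\[
\EE[G\mid\FC_t]=\EE[G]+\int_0^t\EE[D_sG\mid\FC_s]\,dB(s)+\int_0^t\!\!\int_{\RB}\EE[D_{s,\zeta}G\mid\FC_s]\,\tilde N(ds,d\zeta).
\]

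\textbf{Step 2: Itô's formula for the quotient.} Write $V(t)=\EE[G\mid\FC_t]$ and $M(t)^{-1}$. The latter satisfies
\[
dM^{-1}=M^{-1}\Big(-\xi\,dB_\QQ-\int\tfrac{\beta}{1+\beta}\,\tilde N_\QQ(dt,d\zeta)\Big),
\]
using Lemma \ref{lem:girsanov}. Itô's product formula for $V(t)/M(t)$, including the jump cross-terms, then shows that all $dt$ terms cancel, as they must since the quotient is a $\QQ$-martingale. The integrands become
\[
\frac{\EE[D_sG\mid\FC_s]}{M(s)}-\xi(s)\EE_\QQ[F\mid\FC_s]
\]
against $dB_\QQ$, and
\[
\frac{\EE[D_{s,\zeta}G\mid\FC_s]+V(s^-)}{M(s)(1+\beta(s,\zeta))}-\frac{V(s^-)}{M(s^-)}
\]
against $\tilde N_\QQ$.

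\textbf{Step 3: identification.} Now substitute $D_sM(\infty)$ and $D_{s,\zeta}M(\infty)$. These come from Lemma \ref{lem:malliavin-density}, extended from $M(t)$ to $M(\infty)$ by letting $t\to\infty$ (closability of $D$ plus $L^2$ convergence). For general adapted $\xi$, differentiating the exponent produces the factor
\[
\xi(s)+\int_s^\infty D_s\xi(r)\,dB(r)-\int_s^\infty \xi(r)D_s\xi(r)\,dr=\xi(s)+\int_s^\infty D_s\xi(r)\,dB_\QQ(r).
\]
The $\xi(s)$ part cancels the Itô correction, and converting $\EE[M(\infty)\,\cdot\mid\FC_s]/M(s)$ back to $\EE_\QQ[\cdot\mid\FC_s]$ gives the $dB_\QQ$ integrand
\[
\EE_\QQ\Big[D_sF-F\int_s^\infty D_s\xi(r)\,dB_\QQ(r)\ \Big|\ \FC_s\Big].
\]
For the jump part, $M(\infty)^{-1}\big(M(\infty)+D_{s,\zeta}M(\infty)\big)/(1+\beta(s,\zeta))$ is the shifted density ratio. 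Writing it as an exponential and re-expressing $\tilde N$ via $\tilde N_\QQ$ should produce exactly $\tilde H_s$. This algebra (matching the $\nu(d\zeta')dr$ compensator terms with the $(1+\beta)$ factor in $\tilde H_s$) is the step I expect to be the main obstacle. I would check it first in the case of deterministic $\beta$, where $\tilde H_s=1$, and then follow \cite{DiNunno2009} verbatim for the general case.

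\textbf{Step 4: let $t\to\infty$.} Finally pass to the limit using $\EE_\QQ[F\mid\FC_t]\to F$ in $L^2(\QQ)$, since $F$ is $\FC_\infty$-measurable. The stochastic integrals converge by the Itô isometry, provided the integrands are in $L^2(\QQ\times ds)$ on $[0,\infty)$. The exponential weights in $\DD^{1,2}_\lambda$ do not directly give this, so I would simply assume (or verify in the applications) the square-integrability of the two integrands. Note that it is automatically implied by $\EE_\QQ[F^2]<\infty$ once the representation holds on each $[0,t]$.
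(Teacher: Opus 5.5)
The paper states this theorem without any proof; the appendix only quotes it, implicitly from \cite{DiNunno2009}. So there is no argument to compare routes with. Your strategy ($\PP$-Clark--Ocone for $G=M(\infty)F$, Bayes, It\^o for $V/M$) is the standard Karatzas--Ocone one.

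The genuine gap is Step~3. The identification with the stated correction terms is asserted, not derived, and it is false whenever $\xi$ or $\beta$ is random.

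\emph{Brownian part.} You correctly obtain $D_sM(\infty)=M(\infty)\big(\xi(s)+\int_s^\infty D_s\xi(r)\,dB_\QQ(r)\big)$. Hence $D_sG=M(\infty)\big(D_sF+F\xi(s)+F\int_s^\infty D_s\xi(r)\,dB_\QQ(r)\big)$, and after the $\xi(s)$-cancellation the integrand is $\EE_\QQ\big[D_sF+F\int_s^\infty D_s\xi(r)\,dB_\QQ(r)\,\big|\,\FC_s\big]$, with a \emph{plus} sign. The minus sign you write does not follow from your own previous line; it belongs to the convention $dB_\QQ=dB+u\,dt$, and here $u=-\xi$. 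Check: take $\xi(r)=c\sin(B(s_0))\mathbf 1_{(s_0,T]}(r)$ and $F=B(T)=B_\QQ(T)+c(T-s_0)\sin(B_\QQ(s_0))$. The true $dB_\QQ$-integrand is $1+c(T-s_0)\EE_\QQ[\cos B(s_0)\mid\FC_s]\mathbf 1_{[0,s_0]}(s)$, which is the plus-sign formula.

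\emph{Jump part.} The shifted density ratio $\big(M(\infty)+D_{s,\zeta}M(\infty)\big)/\big(M(\infty)(1+\beta(s,\zeta))\big)$ works out to
\[
\begin{aligned}
H_s=\exp\Big(&\int_s^\infty\!\!\int_{\RB}\Big[(1+\beta(r,\zeta'))\ln\Big(1+\frac{D_{s,\zeta}\beta(r,\zeta')}{1+\beta(r,\zeta')}\Big)-D_{s,\zeta}\beta(r,\zeta')\Big]\nu(d\zeta')\,dr\\
&+\int_s^\infty\!\!\int_{\RB}\ln\Big(1+\frac{D_{s,\zeta}\beta(r,\zeta')}{1+\beta(r,\zeta')}\Big)\tilde N_\QQ(dr,d\zeta')\Big).
\end{aligned}
\]
This is the stated integrand with $D_{s,\zeta}\beta$ replaced by $-D_{s,\zeta}\beta$, integrated over $[s,\infty)$ instead of $[0,s]$. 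The range is the decisive error. For adapted $\beta$, $D_{s,\zeta}\beta(r,\zeta')=0$ when $r<s$, so the stated $\tilde H_s\equiv 1$ and the stated jump integrand collapses to $\EE_\QQ[D_{s,\zeta}F\mid\FC_s]$. That is wrong for random $\beta$: for a rate-one Poisson process $N$ ($\nu=\delta_1$), $\beta(r)=b\,\mathbf 1_{\{r\le T,\ N(r-)\ge 1\}}$ and $F=N(T)$, the true integrand on $\{N(s-)=0\}$ is $1+b\big(1-e^{-(T-s)}\big)$, not $1$.

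So carrying out Step~3 yields a corrected formula, not this statement. Both versions also tacitly require that $\xi$ not depend on $\tilde N$ and $\beta$ not depend on $B$; otherwise cross terms appear. What your argument does prove is the deterministic case, which is what Assumption~\ref{ass:A1} literally imposes. There both corrections vanish and the claim is $F=\EE_\QQ[F]+\int_0^\infty\EE_\QQ[D_sF\mid\FC_s]\,dB_\QQ(s)+\int_0^\infty\!\int_{\RB}\EE_\QQ[D_{s,\zeta}F\mid\FC_s]\,\tilde N_\QQ(ds,d\zeta)$. In that case you could even skip the density. Under $\QQ$, $B_\QQ$ is a Brownian motion and $N$ a Poisson random measure with deterministic intensity $(1+\beta)\nu\,dt$. $D_s$ is unaffected by a deterministic drift and $D_{s,\zeta}$ is the add-a-point operator, so the ordinary Clark--Ocone formula under $\QQ$ applies.

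Even in the deterministic case, two points of your write-up need repair.

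First, do not import $D_{s,\zeta}M$ from Lemma~\ref{lem:malliavin-density}. Its jump formula gives $D_{s,\zeta}M(t)=0$ for any adapted $\beta$, whereas for deterministic $\beta$ the correct value is $M(t)\beta(s,\zeta)\mathbf 1_{[0,t]}(s)$. Substituting the lemma into your Step~2 would produce the jump integrand $\EE_\QQ\big[(F+D_{s,\zeta}F)/(1+\beta(s,\zeta))-F\mid\FC_s\big]$. Compute the shifted ratio directly, as you suggest.

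Second, the integrability in Step~1 is not justified on $[0,\infty)$. Boundedness of $\xi,\beta$ gives no moments of $M(\infty)$: for $\xi\equiv1$, $\beta\equiv0$, $M(t)\to0$ a.s. You need $\int_0^\infty\xi^2\,dr<\infty$ and $\int_0^\infty\!\int_{\RB}\beta^2\,\nu(d\zeta)\,dr<\infty$. For deterministic coefficients this is equivalent to the assumed uniform integrability, and it then yields all moments of $M(\infty)$. Even so, $M(\infty)F\in\DD^{1,2}$ under $\PP$ does not follow from the hypotheses. $\EE[(M(\infty)F)^2]=\EE_\QQ[M(\infty)F^2]$ needs more than $F\in L^2(\QQ)$, and $D_sF$ is only controlled against $e^{-\lambda s}\,ds$. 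Instead, prove the formula first for smooth $F_n\to F$ in $\DD^{1,2}_\lambda$. The integrands then converge in the weighted $L^2$ space, while your Step~4 isometry remark makes them Cauchy in unweighted $L^2(\QQ\times ds)$; together these identify the limit.
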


\subsection{Application to BSVIE Solution}

\begin{theorem}[Malliavin representation of $(Z,K)$ with random coefficients]\label{thm:ZK-random}
Let $(Y,Z,K)$ be the adapted M-solution of the infinite-horizon BSVIE~\eqref{eq:BSVIE}, and define
\[
U(t) = \int_t^\infty (\Phi(t,s)Y(s) + h(t,s)) ds - Y(t).
\]
Assume that
\begin{enumerate}
    \item $U(t) \in \DD^{1,2}_\lambda$ for all $t \geq 0$.
    \item $\xi \in \LL^{1,2}_\lambda$ and $\beta(\cdot,\zeta) \in \LL^{1,2}_\lambda$ for all $\zeta$.
    \item The integrability condition holds:
    \[
    \EE_\QQ\Bigg[ \int_0^\infty e^{-\lambda s} \Big( |D_s U(t)|^2 + |U(t)|^2 \int_s^\infty |D_s \xi(r)|^2 dr \Big) ds \Bigg] < \infty.
    \]
\end{enumerate}
Then for $0 \le t \le s < \infty$ and $\zeta \in \RB$:
\begin{align}
Z(t,s) &= \EE_\QQ\Big[ D_s U(t) - U(t) \int_s^\infty D_s \xi(r) dB_\QQ(r) \ \Big|\ \FC_s \Big], \\
K(t,s,\zeta) &= \EE_\QQ\Big[ U(t)(\tilde{H}_s-1) + \tilde{H}_s D_{s,\zeta} U(t) \ \Big|\ \FC_s \Big].
\end{align}
\end{theorem}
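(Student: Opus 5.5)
The plan is to fix $t\ge0$ and read the transformed equation \eqref{eq:BSVIE-Q} as a martingale representation of the single $\FC_\infty$-measurable random variable $U(t)$, and then identify its integrands with the Clark--Ocone formula of Theorem~\ref{thm:clark-ocone-change}. First, I will use the definition of $U(t)$ to rewrite \eqref{eq:BSVIE-Q} as
\[
U(t)=\int_t^\infty Z(t,s)\,dB_\QQ(s)+\int_t^\infty\int_{\RB}K(t,s,\zeta)\,\tilde N_\QQ(ds,d\zeta).
\]
Here the stochastic integrals should be taken as $L^2(\QQ)$ limits of the integrals over $[t,T]$ as $T\to\infty$. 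Convergence comes from the weighted bounds $(Z,K)\in\ZZ_\lambda^2\times\KK_\lambda^2$ together with the uniform integrability and equivalence of $\QQ$ stated after the definition of $M$. Taking $\EE_\QQ[\,\cdot\,|\FC_t]$ then gives $\EE_\QQ[U(t)|\FC_t]=0$, which is consistent with \eqref{eq:Y-explicit}. Because $Z(t,\cdot)$ and $K(t,\cdot,\cdot)$ are adapted in $s$ on $[t,\infty)$, we may extend them by zero on $[0,t)$, and the display becomes a representation of $U(t)$ over all of $[0,\infty)$.

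Second, I will apply Theorem~\ref{thm:clark-ocone-change} to $F=U(t)$. Hypotheses 1 and 3 of the statement give $U(t)\in\DD^{1,2}_\lambda$ and $\EE_\QQ|U(t)|^2<\infty$. Hypothesis 2 ensures that $\int_s^\infty D_s\xi(r)\,dB_\QQ(r)$ and $\tilde H_s$ are well defined, and hypothesis 3 makes the Brownian integrand square integrable. The formula then yields a second representation of $U(t)$ with constant term $\EE_\QQ[U(t)]=0$, Brownian integrand $\EE_\QQ[D_sU(t)-U(t)\int_s^\infty D_s\xi(r)\,dB_\QQ(r)\,|\,\FC_s]$, and jump integrand $\EE_\QQ[U(t)(\tilde H_s-1)+\tilde H_sD_{s,\zeta}U(t)\,|\,\FC_s]$.

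Third, I will compare the two representations using uniqueness of the martingale representation under $\QQ$ with respect to $(B_\QQ,\tilde N_\QQ)$, which follows from the Itô isometry. Subtracting the representations and taking the $L^2(\QQ)$ norm shows that the integrands agree $ds\otimes d\QQ$-a.e., and $\nu(d\zeta)\,ds\otimes d\QQ$-a.e. for the jump part. This gives both formulas for $s\ge t$. As a byproduct, the Clark--Ocone integrands vanish for $s<t$.

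I expect the main obstacle to be the uniqueness step on the infinite horizon. The representation theorem holds under $\QQ$ with the filtration $\{\FC_t\}$, which is generated by $B,\tilde N$ under $\PP$. We therefore need $(B_\QQ,\tilde N_\QQ)$ to have the predictable representation property for this filtration, and that is not automatic for random $\xi,\beta$. I would first try transferring the $\PP$-representation of the $\QQ$-martingale $\EE_\QQ[U(t)|\FC_s]$ via Bayes' formula, using $M(s)^{-1}\EE[M(\infty)U(t)|\FC_s]$ and Itô's product rule; this yields a representation in $(B_\QQ,\tilde N_\QQ)$. The remaining work is to check that the $\QQ$-integrals converge in $L^2(\QQ)$ as $T\to\infty$; for this I would use the weighted integrability of hypothesis 3 together with the boundedness of $\xi$ and $\beta$ and $\beta\ge-1+\varepsilon$.
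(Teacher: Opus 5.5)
Your outline is the paper's own argument (Step~3 of the proof of Theorem~\ref{thm:main}): read \eqref{eq:BSVIE-Q} as a representation of $U(t)$ with $\EE_\QQ[U(t)\mid\FC_t]=0$, apply Theorem~\ref{thm:clark-ocone-change} to $F=U(t)$, and match integrands. The gap is in the one place where you go beyond the paper, namely your justification of the infinite-horizon limits. You claim that $\int_t^T Z(t,s)\,dB_\QQ(s)$ and the jump integral converge in $L^2(\QQ)$ as $T\to\infty$ because $(Z,K)\in\ZZ_\lambda^2\times\KK_\lambda^2$ and $M$ is uniformly integrable. This does not follow, for three reasons:
\begin{itemize}
\item the weight $e^{-\lambda s}$ allows $Z(t,s)=e^{\lambda s/4}$, which has finite $\ZZ_\lambda^2$ norm but $\int_t^\infty|Z(t,s)|^2ds=\infty$;
\item the $\ZZ_\lambda^2$ norm is integrated in $t$, so it says nothing for a fixed $t$;
\item uniform integrability of $M$ only gives $M(\infty)\in L^1(\PP)$, which does not turn $L^2(\PP)$ bounds into $L^2(\QQ)$ bounds.
\end{itemize}
Hypothesis~3 is weighted as well, so it does not give unweighted square integrability either. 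What both the step $\EE_\QQ[U(t)\mid\FC_t]=0$ and your uniqueness-by-isometry step actually use is $\EE_\QQ\int_t^\infty|Z(t,s)|^2ds+\EE_\QQ\int_t^\infty\int_\RB|K(t,s,\zeta)|^2\nu(d\zeta)ds<\infty$ for that fixed $t$. This is the condition that makes the $L^2(\QQ)$ limits exist and the isometry applicable, and nothing in the hypotheses supplies it.

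Without this condition the identification genuinely fails. Fix $t$. Let $\sigma_t$ be the first time after $t$ at which $B_\QQ$ reaches $B_\QQ(t)+1$, and let $\tau_t$ be the first time after $\sigma_t$ at which it returns to $B_\QQ(t)$. By recurrence $\tau_t<\infty$ a.s., so $\int_t^\infty\mathbf{1}_{[t,\tau_t]}(s)\,dB_\QQ(s)=0$ as an a.s.\ limit, while $\EE_\QQ[\tau_t-t]=\infty$. Replacing $Z(t,s)$ by $Z(t,s)+\mathbf{1}_{[t,\tau_t]}(s)$ leaves $Y$, $U$ and every term of \eqref{eq:BSVIE-Q} unchanged. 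The new $Z$ stays in $\ZZ_\lambda^2$, since the perturbation is adapted and bounded by $1$. Yet the two choices cannot both equal the Clark--Ocone integrand of the same $U(t)$.

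So unweighted square integrability under $\QQ$, for each fixed $t$, must be imposed as a hypothesis or built into the solution class. The paper's proof uses it tacitly; with it, and taking Theorem~\ref{thm:clark-ocone-change} as given, your argument goes through. By contrast, the predictable representation property you single out as the main obstacle is not needed. Both representations are already in hand, one from the equation and one from Theorem~\ref{thm:clark-ocone-change}, and uniqueness of integrands uses only the isometry. Your Bayes-formula transfer would merely re-prove existence.
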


\end{document}